\newtheorem{problem}{Problem}
\newtheorem{thm}{Theorem}
\newtheorem{lem}{Lemma}
\theoremstyle{definition}
\newtheorem{rem}{Remark}
\pgfplotsset{compat=1.17}
\title[Stabilized finite elements for Tresca friction problem]{Stabilized finite elements for\\Tresca friction problem}
\newcommand{\Div}[0]{\mathbf{div}\,}
\newcommand{\bs}[1]{\boldsymbol{#1}}
\newcommand{\Sig}[0]{\bs{\sigma}}
\newcommand{\Eps}[0]{\bs{\varepsilon}}
\newcommand{\U}[0]{\bs{u}}
\newcommand{\V}[0]{\bs{v}}
\newcommand{\W}[0]{\bs{w}}
\newcommand{\Lam}[0]{\bs{\lambda}}
\newcommand{\Mu}[0]{\bs{\mu}}
\newcommand{\X}[0]{\bs{\xi}}
\newcommand{\Bf}[0]{\mathcal{B}}
\newcommand{\Lf}[0]{\mathcal{L}}
\newcommand{\N}[0]{\bs{n}}
\newcommand{\T}[0]{\bs{t}}
\newcommand{\F}[0]{\bs{f}}
\newcommand{\Z}[0]{\bs{0}}
\newcommand{\Th}[0]{\mathcal{T}_h}
\newcommand{\Eh}[0]{\mathcal{E}_h}
\newcommand{\Fh}[0]{\mathcal{F}_h}
\newcommand{\Gh}[0]{\mathcal{G}_h}
\newcommand{\enorm}[1]{{\left\vert\kern-0.25ex\left\vert\kern-0.25ex\left\vert #1 
            \right\vert\kern-0.25ex\right\vert\kern-0.25ex\right\vert}}
\begin{document}

\begin{abstract}
We formulate and analyze a Nitsche-type algorithm for  frictional contact problems. The method is derived from, and analyzed as, a stabilized finite element method and shown to be  quasi-optimal, as well as suitable as an adaptive scheme through an a posteriori error analysis. The a posteriori error indicators are validated in a numerical experiment. 
\end{abstract}
\author[Tom Gustafsson]{Tom~Gustafsson}
\author[Juha Videman]{Juha~Videman}

\address[Tom Gustafsson]{Aalto University, Department of Mathematics and Systems Analysis, P.O. Box 11100 FI-00076 Aalto, Finland}
\address[Juha Videman]{CAMGSD and Mathematics Department, Instituto Superior T\'ecnico, Universidade de Lisboa, Av. Rovisco Pais 1, 1049-001 Lisboa, Portugal}

\maketitle

\section{Introduction}

The frictionless contact between deforming bodies
can be interpreted as a minimization problem
with a nonpeneration constraint imposed on the displacement field, see, \emph{e.g.}, \cite{KO88,HHNBook,Wriggers}.
When the friction between the bodies is taken into account, the contact is usually modelled through a solution-dependent  upper bound function for
the tangential traction (Coulomb friction model), see, \emph{e.g.}, \cite{DL76, KO88}. If the upper bound function is prescribed,
the Coulomb friction model simplifies to the  Tresca  friction problem.

In both friction models, the constraints can be resolved with the help of Lagrange multipliers. The  variational inequalities corresponding to the mixed formulation of these contact problems have a saddle point
structure and their approximation by standard finite elements
leads to unstable and ill-conditioned methods.
In particular, the methods are stable if and only if the chosen
finite element spaces satisfy the Babu\v{s}ka--Brezzi inf-sup condition
which is nontrivial in the general case of nonmatching meshes.
Consequently, special finite element bases have been developed
for the Lagrange multiplier that address the stability
and lead to optimally convergent methods, see \cite{W11} and all the references therein.

As an alternative to a mixed method, we consider a stabilized finite element method. To keep the notation and the presentation simple and readable, we restrict ourselves to the unilateral contact between a deformable body and a rigid foundation and consider the Tresca friction problem. We note, however, that it is  straightforward to extend our method to  contact between two deformable bodies.
Besides avoiding the Babu\v{s}ka--Brezzi inf-sup condition, the stabilized method provides an elegant way to derive and justify the use of Nitsche's method for approximating frictional contact problems, cf.~\cite{Chouly2014,CMR18,CFHPR18}. Most importantly, building upon the stabilized formulation we are able to prove the efficiency and reliability of residual-based a posteriori error estimators for Nitsche's method without extra regularity assumptions or resorting to a saturation assumption as in \cite{CFHPR18}.

In this work, we capitalize on the  analysis of stabilized finite element methods for variational inequalities, first presented in \cite{GSV2017} and recently extended to frictionless contact problems in \cite{GSV2020}.
We will  omit some of the proofs (\emph{e.g.}, the continuous  stability estimate) that would follow step by step the reasoning presented in detail in \cite{GSV2017, GSV2020} and instead put more emphasis on the a posteriori error indicators in adaptive schemes as well as discuss the practical implementation of the method  as a Nitsche-type numerical algorithm.
The algorithm is validated in a numerical experiment using open source software
and the source code is freely available~\cite{zenodo}.

The numerical approximation of frictional contact between deformable bodies  has a long-lived history, see, \emph{e.g.}, \cite{KO88,Laursen, Wriggers,W11} and all the references therein.  Different approaches abound, ranging from mixed saddle-point formulations based on dual Lagrange multipliers  
(\cite{HH82,BailletSassi,HS2004,HildRenard2007,HSW2008,W11}) to  Nitsche-type method (\cite{Chouly2014,CFHPR18,CEP2020}).
For a posteriori error analyses of the frictional contact problem based on the saddle-point formulation, we refer to \cite{HildLleras,DM2010,W11} and based on Nitsche's formulation to the Appendix in \cite{CFHPR18}.

The article is organized as follows. In Section 2, we explain the problem setting, in Section 3 give a variational formulation to the Tresca friction  problem, in Section 4 formulate our finite element scheme and in Section 5 show quasi-optimality of the method and perform the a posteriori error analysis. In Section 6, we derive Nitsche's method from the stabilized method, introduce an algorithm for the practical computation of the numerical solution and discuss  implementional aspects. Finally, in Section 7 we present a numerical experiment to corroborate  the usefulness of the a posteriori error indicators.

\section{Tresca friction problem}
Let $\Omega \subset \mathbb{R}^d$, $d\in\{2,3\}$, denote a deformable polygonal (polyhedral) body.
The boundary $\partial \Omega $  is split into three parts
$\Gamma_{D}$, $\Gamma_{N}$ and $\Gamma$, with $\Gamma_{D}$ denoting the part where the displacement of the body is zero, $\Gamma_{N}$ the part of the boundary  with zero traction  and $\Gamma$ the part where contact between the body and a rigid foundation can occur.
The parts $\Gamma_{D}$ and $\Gamma$ are assumed to be separated by $\Gamma_{N}$, i.e.~$\overline{\Gamma_{D}} \cap \overline{\Gamma} = \varnothing$, and $\Gamma$ is a straight line if $d=2$ or a planar polygon if $d=3$.

Let $\U : \Omega \rightarrow \mathbb{R}^d$, denote the displacement of the body $\Omega$.
The infinitesimal strain tensor reads
\begin{equation}
    \Eps(\U) = \frac12\Big(\nabla \U + (\nabla \U)^T\Big),
\end{equation}
and the stress tensor is given by
\begin{equation}
    \Sig(\U) = 2 \mu\,\Eps(\U) + \lambda\,\mathrm{tr}\,\Eps(\U) \boldsymbol{I},
\end{equation}
where $\mu$ and $\lambda$ are the Lam\'e parameters and $\boldsymbol{I}$
is an identity tensor.
We define the normal component of $\U$ as $u_n = \U \cdot \N$ 
where $\N : \partial \Omega \rightarrow \mathbb{R}^{d}$ denotes the outward unit normal to the body $\Omega$.
The traction vector $\Sig(\U)\N$
is split into the normal component $\sigma_n(\U)\N$, $\sigma_{n}(\U) = \Sig(\U) \N \cdot \N$,
and the tangential component $\Sig_{t}(\U) = \Sig(\U)\N - \sigma_{n}(\U)\N$.  Similarly, we define the tangential
displacement as $\U_t = \U - u_n\N$.

The governing equation and the boundary conditions read as follows
\begin{alignat}{2}
    -\Div \Sig(\U) &= \F \quad && \text{in $\Omega$},\\
    \U &= \Z \quad && \text{on $\Gamma_{D}$,}\\
    \Sig(\U)\N &= \Z \quad && \text{on $\Gamma_{N}$,}
\end{alignat}
where $\F \in [L^2(\Omega)]^d$ is the volumetric force.
The physical nonpenetration condition on $\Gamma$ is
\begin{equation}
    u_n - g \leq 0, \quad \sigma_{n}(\U) \leq 0, \quad \sigma_{n}(\U) ( u_n - g) = 0,
\end{equation}
where $g \in H^{1/2}(\Gamma)$ is the gap between
the body and a rigid foundation in the direction of $\N$.
The Tresca friction condition on $\Gamma$ reads as (cf. \cite{DL76,KO88})
\begin{equation}
    \begin{cases}
        | \Sig_{t}(\U) | \leq \kappa, \\
        | \Sig_{t}(\U) | < \kappa \Rightarrow \U_t = \Z,\\
        | \Sig_{t}(\U) | = \kappa > 0 \Rightarrow \exists \nu \geq 0 : \U_t = - \nu \Sig_{t}(\U),
    \end{cases}
\end{equation}
where $\kappa\in L^2(\Gamma)$, $\kappa \geq 0$ a.e.~in $\Gamma$, is a given upper limit for the tangential traction before slip can occur.

The Tresca friction problem can be written as a mixed problem by introducing a dual variable (Lagrange multiplier) $\Lam = - \Sig(\U)\N$, and considering separately its normal and tangential components, $\lambda_n = - \sigma_n(\U)$
and  $\Lam_t = \Lam - \lambda_n \N$.

\begin{problem}[mixed formulation]
\label{prob:mixed}
    Find $(\U, \Lam)$ satisfying
    \begin{alignat}{2}
    -\Div \Sig(\U) &= \F \quad && \text{in $\Omega$},\\
    \U &= \Z \quad && \text{on $\Gamma_{D}$,}\\
    \Sig(\U)\N &= \Z \quad && \text{on $\Gamma_{N}$,}\\
    \Lam + \Sig(\U)\N &= \Z \quad && \text{on $\Gamma$,} \label{eq:stabres}
\end{alignat}
with the contact conditions
\begin{equation}
\begin{cases}
    \lambda_n \geq 0,\\
    u_n -g \leq 0,\\
    \lambda_n(u_n - g) = 0,
\end{cases}
\text{on $\Gamma$},
\end{equation}
and the friction conditions
\begin{equation}
\begin{cases}
    | \Lam_t | \leq \kappa,\\
    | \Lam_t | < \kappa \Rightarrow \U_t = \Z,\\
    | \Lam_t | = \kappa > 0 \Rightarrow \exists \nu \geq 0 : \U_t = \nu \Lam_t,
\end{cases}
\quad \text{on $\Gamma$}.
\end{equation}
\end{problem}

\section{Variational formulation}
We will now present a variational formulation for Problem~\ref{prob:mixed}.
For the primal variable, we consider the  standard Sobolev space
\begin{equation}
    \bs{V} = \{\W \in [H^1(\Omega)]^d : \W |_{\Gamma_{D}} = \Z\}.
\end{equation}
To introduce the spaces for the functions defined on $\Gamma$, we assume the existence of a local orthonormal basis $\{\N(\bs{x}), \T(\bs{x})\}$ for $d=2$
and $\{\N(\bs{x}), \T_1(\bs{x}), \T_2(\bs{x})\}$ for $d=3$ at $\bs{x} \in \Gamma$, and
split the trace $\W|_\Gamma$ into normal and tangential components, i.e.
\[
\W(\bs{x}) = (w_n(\bs{x}), \W_t(\bs{x})) \in \mathbb{R} \times \mathbb{R}^{d-1}
\]
for any $\bs{x} \in \Gamma$.
Consequently, the trace of $\W \in \bs{V}$ on $\Gamma$ belongs to $H^{1/2}(\Gamma) \times (H^{1/2}(\Gamma))^{d-1}$
with the norm
\[
    \|(w_n, \W_t)\|^2_{\frac12,\Gamma} = \|w_n\|_{\frac12,\Gamma}^2 + \|\W_t\|_{\frac12,\Gamma}^2
\]
where
\[
\|w\|_{\frac12,\Gamma} = \inf_{v \in H^1(\Omega), v|_{\Gamma} = w} \|v\|_1, \quad \text{and} \quad 
    \|\W\|^2_{\frac12,\Gamma} = \sum_{i=1}^{d-1} \|w_i\|_{\frac12,\Gamma}^2.
\]
This splitting allows treating separately the normal and the tangential components of the Lagrange multiplier vector which in the Tresca friction formulation are uncoupled. To this end, we define the spaces
\begin{align*}
    \varLambda_n &= \{ \mu \in H^{-1/2}(\Gamma) : \langle v, \mu \rangle \geq 0 ~\forall v \in H^{1/2}(\Gamma), v \geq 0~\text{a.e.~in $\Gamma$} \}, \\
    \bs{\varLambda}_t &= \{ \Mu \in (H^{-1/2}(\Gamma))^{d-1} : \langle \V, \Mu \rangle \leq ( \kappa, | \V |)_\Gamma~\forall \V \in (H^{1/2}(\Gamma))^{d-1} \},
\end{align*}
where by $\langle \cdot,\cdot \rangle \!: \!H^{1/2} (\Gamma ) \times  H^{-1/2}(\Gamma)\rightarrow  \mathbb{R}$ we denote the duality pairing and, for $(\W,\X) \in (H^{1/2}(\Gamma))^{d-1} \times (H^{-1/2}(\Gamma))^{d-1}$,  write 
$\langle \W, \X \rangle = \sum_{i=1}^{d-1} \langle w_i, \xi_i \rangle$.  
The dual space $H^{-1/2}(\Gamma)$ and the vectorial dual space $(H^{-1/2}(\Gamma))^{d-1}$ are equipped with the norms
\[
\|\xi\|_{-\frac12,\Gamma} = \sup_{v \in H^{1/2}(\Gamma)} \frac{\langle v, \xi \rangle}{\|v\|_{\frac12,\Gamma}} \quad \text{and} \quad
\| \X \|_{-\frac12,\Gamma}^2 = \sum_{i=1}^{d-1} \| \xi_i\|_{-\frac12,\Gamma}^2,
\]
respectively.
Moreover, we let $\bs{Q}=H^{-1/2}(\Gamma) \times (H^{-1/2}(\Gamma))^{d-1}$ and define
a bilinear form
$\Bf : (\bs{V} \times \bs{Q}) \times (\bs{V} \times \bs{Q}) \rightarrow \mathbb{R}$
through
\begin{equation}
\Bf(\W, \X; \V, \Mu) = (\Sig(\W), \Eps(\V))_\Omega + b( \V, \X ) + b( \W, \Mu ),
\end{equation}
where $b(\W, \X) = \langle w_n, \xi_n \rangle + \langle \W_t, \X_t \rangle$ for any $\W \in \bs{V}$ and $\X = (\xi_n, \X_t) \in \bs{Q}$.
We also define a linear form $\Lf : \bs{V} \times \bs{Q} \rightarrow \mathbb{R}$ by
\begin{equation}
\Lf(\V, \Mu) = (\F, \V)_{\Omega} + \langle g, \mu_n \rangle.
\end{equation}

Let $\bs{\varLambda} = \varLambda_n \times \bs{\varLambda}_t \subset \bs{Q}$. It is now straightforward to write Problem~\ref{prob:mixed} as the following variational inequality.  
\begin{problem}[variational formulation]
   \label{prob:weak}
   Find $(\bs{u}, \bs{\lambda}) \in \bs{V} \times \bs{\varLambda}$ such that
   \[
       \Bf(\U,\Lam;\V,\Mu - \Lam) \leq \Lf(\V, \Mu - \Lam) \quad \forall (\V, \Mu) \in \bs{V} \times \bs{\varLambda}.
   \]
\end{problem}

\begin{rem} The corresponding primal variational formulation of the Tresca friction problem  reads as follows:
   find $\bs{u} \in \bs{K}=\{\V \in \bs{V}:v_n-g\leq 0~\text{a.e.~on $\Gamma$} \}$ such that
   \[
       (\Sig(\U), \Eps(\V-\U))_\Omega +( \kappa,|\V_t|)_\Gamma - ( \kappa,|\U_t|)_\Gamma \leq  (\F, \V-\U)_{\Omega} \quad \forall \V \in \bs{K}.
   \]
 The existence of a unique solution to this variational inequality of the second kind is classical, cf.~\cite{DL76,KO88,HHNBook}. However, the nondifferentiable term  $(\kappa,|\V_t|)_\Gamma$ makes the primal formulation less amenable to numerical approximation than the mixed formulation.
 The equivalence between the mixed and the primal formulations has been established in \cite{HH82}, see also \cite{BailletSassi,HS2004}.
\end{rem}

\begin{rem}
Since $\kappa\in L^2(\Gamma)$ and the contact region is smooth, the tangential traction is in fact a $L^2(\Gamma)$-function, cf.~\cite{LabRen2008}, and the set $ \bs{\varLambda}_t$ is often defined  using this extra regularity, cf.~\cite{HHNBook,BailletSassi,HS2004}. The error estimates for $\Lam_t$ are, however, naturally obtained in the $H^{-1/2}(\Gamma)$-norm and the definition  chosen here for $ \bs{\varLambda}_t$ can be naturally extended to the Coulomb friction case, cf.~\cite{W11}. Moreover, the stabilization terms defined below are related with discrete $H^{-1/2}(\Gamma)$-norms and the resulting stabilized method leads to the  Nitsche's method presented in the literature \cite{Chouly2014,French-survey,CMR18}. 
\end{rem}

The variational formulation will be analyzed in the norm
\begin{equation}
\|(\W,\X) \|^2 = \enorm{\W}^2 + \|\xi_n\|^2_{-\frac12,\Gamma} + \|\X_t\|^2_{-\frac12,\Gamma}
\end{equation}
where
\begin{equation}
    \enorm{\W}^2 = (\Sig(\W), \Eps(\W))_\Omega.
\end{equation}
The proof of the following result is quite standard, see, \emph{e.g.},  \cite{GSV2020}, for more details. 

\begin{thm}[continuous stability]
   For every $(\W,\X) \in \bs{V} \times \bs{Q}$ there exists $\V \in \bs{V}$ and constants $C_1,C_2>0$ such that
   \begin{equation}
       \Bf(\W,\X;\V,-\X) \geq C_1 \|(\W, \X)\|^2 
   \end{equation}
   and
   \begin{equation}
        \enorm{\V} \leq C_2 \|(\W, \X)\|.
   \end{equation}
\end{thm}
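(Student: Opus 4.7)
The plan is to employ a standard Fortin-type splitting: test the form with $\V = \W + \alpha\,\V_\lambda$, where $\V_\lambda \in \bs{V}$ is a carefully chosen lifting of suitable boundary data representing $\X$, and $\alpha>0$ is a small parameter to be fixed at the end. Picking $\Mu = -\X$ and $\V = \W$ already gives the "diagonal" contribution
\[
\Bf(\W,\X;\W,-\X) = (\Sig(\W),\Eps(\W))_\Omega + b(\W,\X) - b(\W,\X) = \enorm{\W}^2,
\]
so the role of $\V_\lambda$ is solely to recover control on the dual norms $\|\xi_n\|_{-\frac12,\Gamma}$ and $\|\X_t\|_{-\frac12,\Gamma}$.

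To build $\V_\lambda$, I would first use the definition of the negative norms as suprema to pick, for any $\varepsilon>0$, a scalar $v_\xi \in H^{1/2}(\Gamma)$ and a vector $\W_\xi \in (H^{1/2}(\Gamma))^{d-1}$ satisfying
\[
\langle v_\xi,\xi_n\rangle \geq \tfrac12\|\xi_n\|_{-\frac12,\Gamma}^2,\quad \|v_\xi\|_{\frac12,\Gamma}\leq \|\xi_n\|_{-\frac12,\Gamma},
\]
and analogously for $\W_\xi$ and $\X_t$. The separation hypothesis $\overline{\Gamma_D}\cap\overline{\Gamma}=\varnothing$ then allows, via a smooth cutoff and the standard trace-lifting theorem, the construction of a function $\V_\lambda\in\bs{V}$ whose normal trace on $\Gamma$ equals $v_\xi$, whose tangential trace on $\Gamma$ equals $\W_\xi$, and for which (using Korn)
\[
\enorm{\V_\lambda} \leq C\bigl(\|\xi_n\|_{-\frac12,\Gamma} + \|\X_t\|_{-\frac12,\Gamma}\bigr).
\]
By construction, $b(\V_\lambda,\X) \geq \tfrac12\bigl(\|\xi_n\|_{-\frac12,\Gamma}^2 + \|\X_t\|_{-\frac12,\Gamma}^2\bigr)$.

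With this $\V_\lambda$ in hand, evaluating $\Bf(\W,\X;\W+\alpha\V_\lambda,-\X)$ yields
\[
\enorm{\W}^2 + \alpha(\Sig(\W),\Eps(\V_\lambda))_\Omega + \alpha\, b(\V_\lambda,\X).
\]
The cross term is estimated by Cauchy--Schwarz in the energy inner product and Young's inequality, $|\alpha(\Sig(\W),\Eps(\V_\lambda))_\Omega|\leq \tfrac12\enorm{\W}^2 + \tfrac{\alpha^2}{2}\enorm{\V_\lambda}^2$, and the second term is bounded by the lifting estimate above. Choosing $\alpha$ small enough that the $\alpha^2$-term is absorbed by a fraction of $\alpha\,b(\V_\lambda,\X)$ produces the desired lower bound $C_1\|(\W,\X)\|^2$. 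The continuity bound $\enorm{\V}\leq C_2\|(\W,\X)\|$ follows immediately from $\enorm{\V}\leq \enorm{\W}+\alpha\enorm{\V_\lambda}$ and the lifting estimate.

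The main obstacle is the construction of $\V_\lambda$: it requires simultaneously prescribing the normal and tangential traces on $\Gamma$, enforcing the homogeneous Dirichlet condition on $\Gamma_D$, and controlling the full $H^1$-norm by the $H^{1/2}$-norms of the traces. This is where the separation assumption $\overline{\Gamma_D}\cap\overline{\Gamma}=\varnothing$ is essential; with it, a partition-of-unity argument reduces the problem to a standard extension from a boundary patch, but some care is needed because the normal and tangential components must be handled via the local frame $\{\N,\T_i\}$ on $\Gamma$. All remaining steps --- Korn, Young, and the absorption argument --- are routine and follow the same pattern as in \cite{GSV2017,GSV2020}.
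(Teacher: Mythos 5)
Your proposal is correct and follows the standard argument behind the result the paper references: testing with $\V=\W$ and $\Mu=-\X$ yields the energy contribution $\enorm{\W}^2$, and adding a small multiple of a bounded $\bs{V}$-lifting of near-optimal duality witnesses for $\xi_n$ and $\X_t$ recovers control of the negative Sobolev norms, with Young's inequality absorbing the cross term. One small remark: Korn's inequality is not actually needed for the upper bound $\enorm{\V_\lambda}\lesssim\|\V_\lambda\|_1$ (that direction is just boundedness of $\Eps$), whereas the separation $\overline{\Gamma_D}\cap\overline{\Gamma}=\varnothing$ is, as you correctly identify, the essential ingredient permitting the cutoff-based construction of a lifting that lies in $\bs{V}$ with norm controlled by the trace data alone.
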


\section{Finite element method}

Let $\Th$ denote a subdivision of the domain $\Omega$ into
nonoverlapping elements, triangles ($d=2$) or tetrahedra ($d=3$),
with $h$ denoting the global mesh parameter.
We denote by $\Gh$ the trace mesh of $\Th$ on $\Gamma$,
consisting of edge segments ($d=2$) or facet triangles ($d=3$)
of the elements in $\Th$.  The set of interior edges/facets are denoted by $\Eh$ and the set of boundary edges/facets belonging to the boundary $\Gamma_N$  by $\Fh$. In what follows, we write $a \lesssim b$ if $a \leq C b$ for some $C>0$ independent of the mesh parameter $h$; similarly for $a \gtrsim b$.

Aiming at constructing a uniformly stable approximation of Problem \ref{prob:weak}, 
we augment the bilinear form $\Bf$ using the residual of \eqref{eq:stabres}
as follows:
\begin{equation}
\Bf_h(\W, \X; \V, \Mu) = \Bf(\W, \X; \V, \Mu) - \alpha \sum_{E \in \Gh} h_E (\X + \Sig(\W)\N, \Mu + \Sig(\V)\N)_E
\end{equation}
where $\alpha > 0$ is a stabilization parameter and $h_E$ is the local mesh parameter corresponding to $E \in \Gh$.
The finite element spaces $\bs{V}_h \subset \bs{V}$ and $\bs{Q}_h \subset \bs{Q}$ are defined as
\begin{align}
    \bs{V}_{h} &= \{ \W \in \bs{V} : \W|_K \in [P_m(K)]^d~\forall K \in \Th \}, \\
    \bs{Q}_h &= \{ (\mu_n, \Mu_t) \in \bs{Q} : \mu_n|_E \in P_l(E)~\text{and}~\Mu_t|_E \in [P_l(E)]^{d-1}~\forall E \in \Gh \},
\end{align}
where $m \geq 1$ and $l \geq 0$ denote the polynomial orders.
In addition, we define the convex subset
\begin{equation}
    \bs{\varLambda}_h = \{ (\mu_{n}, \Mu_{t}) \in \bs{Q}_h : \mu_{n} \geq 0, ~ | \Mu_{t} | \leq \kappa \} \subset \bs{\varLambda}.
\end{equation}

The stabilized finite element method corresponds to solving the following variational problem. 
\begin{problem}[discrete formulation]
  \label{prob:stab}
   Find $(\U_h, \Lam_h) \in \bs{V}_h \times \bs{\varLambda}_h$ such that
  \begin{equation}
       \Bf_h(\U_h,\Lam_h;\V_h,\Mu_h - \Lam_h) \leq \Lf(\V_h, \Mu_h - \Lam_h) \quad \forall (\V_h, \Mu_h) \in \bs{V}_h \times \bs{\varLambda}_h.
       \label{discprob}
\end{equation}
\end{problem}
We will show in Theorem \ref{thm:disc} below that the discrete formulation
is stable in the following mesh-dependent norm:
\begin{equation}
\| (\V,\Mu) \|^2_h = \|(\V,\Mu)\|^2 + \sum_{E \in \Gh} h_E \|\Mu\|_{0,E}^2, \quad (\V,\Mu) \in \bs{V}_h \times \bs{Q}_h.
\end{equation}
In the proof, we will use the following  discrete trace estimate, established using a scaling argument.
\begin{lem}
\label{lem:trace}
For any $\W_h \in \bs{V}_h$, there exists $C_I>0$ such that 
\begin{equation}
    C_I \sum_{E \in \Gh} h_E \| \Sig(\W_h)\N \|_{0,E}^2 \leq \enorm{ \W_h }^2.
\end{equation}
\end{lem}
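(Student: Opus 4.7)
The statement is a classical local-to-global discrete inverse trace inequality. The plan is to prove it element-by-element via a scaling argument, then sum over the boundary elements touching $\Gamma$ and relate $\|\Sig(\W_h)\|_{0,\Omega}^2$ to the energy norm $\enorm{\W_h}^2$.

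First I would localize. For each $E \in \Gh$, let $K_E \in \Th$ denote the (unique) element with $E \subset \partial K_E$. Since $\Gamma$ is a straight segment (resp.~a planar polygon) by the assumption in Section~2, the outward unit normal $\N$ is constant on $E$. Because $\W_h|_{K_E} \in [P_m(K_E)]^d$, the traction $\Sig(\W_h)\N$ restricted to $E$ lies in a fixed finite-dimensional space of polynomials (of degree at most $m-1$).

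Next I would pass to the reference element $\widehat K$ through an affine map $F_{K_E}\!:\widehat K \to K_E$ with the usual shape-regularity estimates $|\det DF_{K_E}| \sim h_{K_E}^d$ and $h_{K_E} \sim h_E$. On $\widehat K$, norm equivalence on the finite-dimensional polynomial space yields $\|\hat q\|_{0,\widehat E} \lesssim \|\hat q\|_{0,\widehat K}$ for any $\hat q$ of the relevant degree, with a constant independent of $\W_h$. Scaling back component by component gives the local inverse trace estimate
\begin{equation*}
h_E \|\Sig(\W_h)\N\|_{0,E}^2 \lesssim \|\Sig(\W_h)\|_{0,K_E}^2,
\end{equation*}
with the constant depending only on $m$, the space dimension $d$, and the shape-regularity of $\Th$.

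Finally I would sum over $E \in \Gh$. Since each element of $\Th$ contributes at most once (or a bounded number of times, controlled by shape regularity), we obtain
\begin{equation*}
\sum_{E \in \Gh} h_E \|\Sig(\W_h)\N\|_{0,E}^2 \lesssim \sum_{K \in \Th} \|\Sig(\W_h)\|_{0,K}^2 = \|\Sig(\W_h)\|_{0,\Omega}^2.
\end{equation*}
Using the constitutive relation $\Sig(\W) = 2\mu\,\Eps(\W) + \lambda\,\mathrm{tr}\,\Eps(\W)\bs{I}$ and the positivity of the Lam\'e coefficients, one has $\|\Sig(\W_h)\|_{0,\Omega}^2 \lesssim \|\Eps(\W_h)\|_{0,\Omega}^2 \lesssim \enorm{\W_h}^2$, which closes the argument.

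There is no real obstacle here; the proof is a standard scaling/norm-equivalence argument. The only mild subtleties worth flagging are that the constant $C_I$ depends on $\mu,\lambda$ and on shape regularity (hence on $m$ and $d$), and that the use of a \emph{constant} normal on each $E$ crucially relies on the flatness assumption made on $\Gamma$ in Section~2; without it, extra curvature terms would appear in the scaling and the estimate would hold only up to lower-order perturbations.
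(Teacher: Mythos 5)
Your proof is correct and follows exactly the scaling argument the paper alludes to; the paper itself only states that the lemma is ``established using a scaling argument'' and does not write out the details, so your element-wise inverse trace estimate plus the observation that $\|\Sig(\W_h)\|_{0,\Omega}^2 \lesssim \enorm{\W_h}^2$ is precisely the intended argument.
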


\begin{thm}[discrete stability]
\label{thm:disc}
Suppose that $0 < \alpha < C_I$.
Then for every $(\W_h,\X_h) \in \bs{V}_h \times \bs{Q}_h$ there exists $\V_h \in \bs{V}_h$ satisfying
\begin{equation}
    \Bf_h(\W_h,\X_h;\V_h,-\X_h) \geq C_1 \|(\W_h, \X_h)\|_h^2 
\end{equation}
and
\begin{equation}
    \enorm{\V_h} \leq C_2 \|(\W_h, \X_h)\|_h.
\end{equation}
\end{thm}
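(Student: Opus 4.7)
The plan is to test with a function of the form $\V_h = \W_h + \delta\,\tilde{\V}_h$, where $\delta>0$ is a small parameter fixed at the end and $\tilde{\V}_h\in\bs{V}_h$ is a quasi-interpolant of the function supplied by the continuous stability theorem. Testing with $\W_h$ alone exploits the stabilization to deliver the energy term and the new $\sum_E h_E\|\X_h\|_{0,E}^2$ contribution to $\|\cdot\|_h^2$; the perturbation $\tilde{\V}_h$ recovers the $H^{-1/2}$-component of the norm that a diagonal test cannot see.

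First, I would compute $\Bf_h(\W_h,\X_h;\W_h,-\X_h)$: the unstabilized part collapses to $\enorm{\W_h}^2$, while the stabilization term expands to $\alpha\sum_{E\in\Gh} h_E\bigl(\|\X_h\|_{0,E}^2 - \|\Sig(\W_h)\N\|_{0,E}^2\bigr)$. Applying Lemma~\ref{lem:trace} to absorb the term involving $\Sig(\W_h)\N$ into $\enorm{\W_h}^2$ and using the hypothesis $\alpha<C_I$ yields the partial lower bound $(1-\alpha/C_I)\enorm{\W_h}^2 + \alpha\sum_{E\in\Gh} h_E\,\|\X_h\|_{0,E}^2$. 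This controls two of the three norm components in $\|\cdot\|_h^2$ but leaves the $H^{-1/2}$-norms of $\X_h$ untreated.

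Second, to recover the missing component, I would invoke continuous stability for $(\W_h,\X_h)\in\bs{V}\times\bs{Q}$ to obtain $\V^*\in\bs{V}$ with $\Bf(\W_h,\X_h;\V^*,-\X_h)\gtrsim\|(\W_h,\X_h)\|^2$ and $\enorm{\V^*}\lesssim\|(\W_h,\X_h)\|$, and then replace $\V^*$ by a Scott--Zhang-type quasi-interpolant $\tilde{\V}_h=\Pi_h\V^*\in\bs{V}_h$ which is $H^1$-stable and satisfies the facet approximation bound $\sum_{E\in\Gh} h_E^{-1}\|\V^*-\tilde{\V}_h\|_{0,E}^2\lesssim\enorm{\V^*}^2$. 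Writing $\tilde{\V}_h=\V^*+(\tilde{\V}_h-\V^*)$, the $\V^*$-piece inherits the full positive contribution $\Bf(\W_h,\X_h;\V^*,-\X_h)\gtrsim\|(\W_h,\X_h)\|^2$ from continuous stability, while the interpolation-error piece is controlled by Cauchy--Schwarz,
\[
|b(\tilde{\V}_h-\V^*,\X_h)|\lesssim \enorm{\V^*}\Bigl(\sum_{E\in\Gh} h_E\,\|\X_h\|_{0,E}^2\Bigr)^{1/2},
\]
and the $(\Sig(\W_h),\Eps(\tilde{\V}_h-\V^*))_\Omega$ piece by Cauchy--Schwarz and $H^1$-stability of $\Pi_h$. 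The additional stabilization pairing $h_E(\X_h+\Sig(\W_h)\N,\Sig(\tilde{\V}_h)\N)_E$ coming from $\Bf_h$ is dispatched via Lemma~\ref{lem:trace} on $\tilde{\V}_h$.

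Assembling the two tests with $\V_h=\W_h+\delta\,\tilde{\V}_h$ and invoking Young's inequality on the cross terms, the positive $\alpha\sum_E h_E\|\X_h\|_{0,E}^2$ produced in the first step absorbs the interpolation remainder, and $\delta$ is then chosen small enough that what remains dominates $\|(\W_h,\X_h)\|_h^2$ from below. The continuity estimate $\enorm{\V_h}\lesssim\|(\W_h,\X_h)\|_h$ is immediate from the triangle inequality, the $H^1$-stability of $\Pi_h$, and continuous stability. I expect the main obstacle to be the bookkeeping around the interpolation remainder $b(\tilde{\V}_h-\V^*,\X_h)$: this term is precisely what motivates the weighted-$L^2$ shape of the stabilization, and its interplay with the $\alpha\sum_E h_E\|\X_h\|_{0,E}^2$ from the diagonal test is what ultimately fixes the admissible window for $\alpha$ and $\delta$.
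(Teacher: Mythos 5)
Your plan follows the paper's proof essentially step for step: test with $\V_h = \W_h + \delta\widetilde{\V}_h$, use the trace estimate of Lemma~\ref{lem:trace} together with $\alpha < C_I$ to get the energy and weighted-$L^2$ contributions from the diagonal test, bring in a quasi-interpolant of a stability field from the continuous theory to recover the $H^{-1/2}$-part, and absorb all cross terms with Young's inequality and a small $\delta$. The continuity bound at the end is handled as in the paper.

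The one place where your plan is slightly off is in how you invoke continuous stability. You apply it to $(\W_h,\X_h)$ and claim the resulting $\V^*$-piece of the second test ``inherits'' $\Bf(\W_h,\X_h;\V^*,-\X_h) \gtrsim \|(\W_h,\X_h)\|^2$. But the actual $\V^*$-contribution to $\Bf_h(\W_h,\X_h;\widetilde{\V}_h,\Z)$, after splitting $\widetilde{\V}_h = \V^* + (\widetilde{\V}_h - \V^*)$, is $(\Sig(\W_h),\Eps(\V^*)) + b(\V^*,\X_h)$, which equals $\Bf(\W_h,\X_h;\V^*,-\X_h) + b(\W_h,\X_h)$; the extra $b(\W_h,\X_h)$ needs its own Young's inequality. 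Moreover, with your choice $\enorm{\V^*} \lesssim \|(\W_h,\X_h)\|$ rather than $\lesssim \|\X_h\|_{-1/2,\Gamma}$, all the interpolation-remainder terms are controlled by the full norm of the pair, which makes the final bookkeeping fatter (though still closeable for small $\delta$). The paper avoids both issues by reading the continuous stability estimate at $(\Z,\X_h)$, i.e., extracting precisely the inf-sup-type fact that there is $\V \in \bs{V}$ depending only on $\X_h$ with $b(\V,\X_h) \gtrsim \|\xi_{h,n}\|^2_{-1/2,\Gamma} + \|\X_{h,t}\|^2_{-1/2,\Gamma}$ and $\enorm{\V} \lesssim \|\X_h\|_{-1/2,\Gamma}$. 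If you make that substitution, your outline matches the paper's proof cleanly; as currently stated, the step is salvageable but imprecise.
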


\begin{proof}
Using Lemma~\ref{lem:trace}, we first obtain
\begin{align*}
    \Bf_h(\W_h,\X_h;\W_h,-\X_h)&=\enorm{\W_h}^2 - \alpha \sum_{E \in \Gh} h_E \| \Sig(\W_h)\N \|^2_{0,E}+\alpha \sum_{E \in \Gh} h_E \| \X_h \|_{0,E}^2 \\
    &\geq \left(1 - \frac{\alpha}{C_I}\right) \enorm{\W_h}^2 + \alpha \sum_{E \in \Gh} h_E \| \X_h \|_{0,E}^2.
\end{align*}
On the other hand, the continuous stability estimate implies that there exists $\V \in \bs{V}$ such that for any $\X_h \in \bs{Q}_h$ it holds
\begin{equation}
\label{eq:contstab1}
b(\V,\X_h) \geq C_1 (\| \xi_{h,n} \|_{-\frac12,\Gamma}^2 + \| \X_{h,t} \|_{-\frac12,\Gamma}^2)
\end{equation}
and, moreover, 
\begin{equation}
\label{eq:contstab2}
\enorm{\V}^2 \leq C_2 (\| \xi_{h,n} \|_{-\frac12,\Gamma}^2 + \| \X_{h,t} \|_{-\frac12,\Gamma}^2).
\end{equation}
Let $\widetilde{\V} \in \bs{V}_h$ now be the Cl\'ement interpolant of $\V$.  It follows that
\begin{equation}
\label{eq:verfurth}
\begin{aligned}
b(\widetilde{\V},\X_h) &= b( \widetilde{\V} - \V,\X_h) + b(\V,\X_h) \\
&\geq - \sum_{E \in \Gh} h_E^{-1/2} \| v_n - \widetilde{v}_n \|_{0,E}\, h_E^{1/2} \| \xi_{h,n} \|_{0,E} \\
&\quad- \sum_{E \in \Gh} h_E^{-1/2} \| \V_t - \widetilde{\V}_t \|_{0,E}\, h_E^{1/2} \| \X_{h,t} \|_{0,E} \\
&\quad +  C_1 (\| \xi_{h,n} \|_{-\frac12,\Gamma}^2 + \| \X_{h,t} \|_{-\frac12,\Gamma}^2)\\
&\geq - \left(\sum_{E \in \Gh} h_E^{-1} \| v_n - \widetilde{v}_n \|_{0,E}^2 \right)^{1/2} \left(\sum_{E \in \Gh} h_E \| \xi_{h,n} \|_{0,E}^2\right)^{1/2} \\
&\quad- \left(\sum_{E \in \Gh} h_E^{-1} \| \V_t - \widetilde{\V}_t \|_{0,E}^2 \right)^{1/2} \left( \sum_{E \in \Gh} h_E \| \X_{h,t} \|_{0,E}^2 \right)^{1/2} \\
&\quad +  C_1 (\| \xi_{h,n} \|_{-\frac12,\Gamma}^2 + \| \X_{h,t} \|_{-\frac12,\Gamma}^2) \\
&\geq - C_3 \enorm{\V} \left( \sum_{E \in \Gh} h_E \| \X_{h} \|_{0,E}^2 \right)^{1/2}  \hspace{-0.2cm}+  C_1 (\| \xi_{h,n} \|_{-\frac12,\Gamma}^2 + \| \X_{h,t} \|_{-\frac12,\Gamma}^2) \\
&\geq - C_4 \sum_{E \in \Gh} h_E \| \X_{h} \|_{0,E}^2 +  C_5 (\| \xi_{h,n} \|_{-\frac12,\Gamma}^2 + \| \X_{h,t} \|_{-\frac12,\Gamma}^2).
\end{aligned}
\end{equation}
where we have used Young's inequality,
estimates \eqref{eq:contstab1} and \eqref{eq:contstab2}, and the following properties
of the Cl\'ement interpolant:
\begin{equation}
\label{eq:clement}
\sum_{E \in \Gh} h_E^{-1} \| \V - \widetilde{\V} \|_{0,E}^2 \lesssim  \enorm{\V}^2 \quad \text{and} \quad \enorm{ \widetilde{\V}} \lesssim \enorm{\V}.
\end{equation}
Finally, combining the estimates above, we obtain the bound
\begin{align*}
    &\Bf_h(\W_h,\X_h;\W_h + \delta \widetilde{\V},-\X_h) \\
    &\geq  \left(1 - \frac{\alpha}{C_I}\right) \enorm{\W_h}^2 + \alpha \sum_{E \in \Gh} h_E \| \X_h \|_{0,E}^2 \\
    &\quad - \delta \enorm{\W_h} \enorm{\widetilde{\V}} + \delta b(\widetilde{\V},\X_h) - \alpha \delta \sum_{E \in \Gh} h_E(\X_h + \Sig(\W_h)\N, \Sig(\widetilde{\V})\N)_E \\
    &\gtrsim \| (\W_h, \X_h) \|_h^2,
\end{align*}
where the last step follows, choosing $\delta > 0$ small enough, from Young's inequality, estimates \eqref{eq:contstab2}, \eqref{eq:verfurth} and \eqref{eq:clement}, and Lemma~\ref{lem:trace}.
\end{proof}

\section{Error analysis}

Let $\F_h \in \bs{V}_h$ be the $[L^2(\Omega)]^d$-projection of $\F$. For any $K \in \Th$, we define the oscillation of $\F$ by $\mathrm{osc}_{K}(\F) = h_K\|\F - \F_h\|_{0,K}$. Moreover, we denote by $K(E) \in \Th$ the element having $E \in \Gh$ as one of its facets, i.e., $\partial K(E)\cap E = E$.

\begin{lem}
\label{lem:resbound}
For any $(\W_h, \X_h) \in \bs{V}_h \times \bs{\varLambda}_h$ it holds that
\begin{align*}
    &\left(\sum_{E \in \Gh} h_E \| \X_h + \Sig(\W_h)\N \|_{0,E}^2\right)^{1/2}\\
    &\qquad \lesssim \|(\U - \W_h, \Lam - \X_h) \| + \left(\sum_{E \in \Gh} \mathrm{osc}_{K(E)}(\F)^2 \right)^{1/2}.
\end{align*}
\end{lem}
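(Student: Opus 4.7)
The plan is to bound the weighted boundary residual by constructing a facet-bubble test function on each $E\in\Gh$, performing integration by parts into the interior element $K(E)$ so that the global residuals $\Lam+\Sig(\U)\N=\Z$ on $\Gamma$ and $-\Div\Sig(\U)=\F$ in $\Omega$ can be inserted, and finally invoking the classical interior-bubble efficiency estimate to handle the element residuals that the integration by parts generates.

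Concretely, for each $E\in\Gh$ set $\phi_E=(\X_h+\Sig(\W_h)\N)|_E$, which is a polynomial on $E$, and note that the continuous transmission condition $\Lam+\Sig(\U)\N=\Z$ on $\Gamma$ yields the decomposition
\[
\phi_E=(\X_h-\Lam)+\Sig(\W_h-\U)\N \qquad\text{on }E.
\]
Let $b_E$ be the standard facet bubble supported in $K(E)$ and vanishing on $\partial K(E)\setminus E$, let $\phi_E^\ast$ denote a polynomial extension of $\phi_E$ from $E$ into $K(E)$, and put $\V_E=h_E\,b_E\,\phi_E^\ast$, extended by zero to $\Omega$. Standard reference-element scaling and norm-equivalence on finite-dimensional polynomial spaces give
\[
(\V_E,\phi_E)_E\gtrsim h_E\|\phi_E\|_{0,E}^2,\qquad \enorm{\V_E}+h_E^{-1}\|\V_E\|_{0,K(E)}\lesssim h_E^{1/2}\|\phi_E\|_{0,E}.
\]
With $\V=\sum_E\V_E\in\bs{V}$ the supports are essentially disjoint under shape regularity, so Korn and the trace inequality yield $\enorm{\V}+\|\V\|_{\frac12,\Gamma}\lesssim\bigl(\sum_E h_E\|\phi_E\|_{0,E}^2\bigr)^{1/2}$.

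The master identity is obtained element-by-element. Since $\V_E$ vanishes on $\partial K(E)\setminus E$, integration by parts on $K(E)$ gives $(\V_E,\Sig(\W_h)\N)_E=(\Sig(\W_h),\Eps(\V_E))_{K(E)}+(\Div\Sig(\W_h),\V_E)_{K(E)}$, while $(\V_E,\Sig(\U)\N)_E=(\Sig(\U),\Eps(\V_E))_{K(E)}-(\F,\V_E)_{K(E)}$ follows from $-\Div\Sig(\U)=\F$. Combining these with $\Sig(\U)\N=-\Lam$ on $\Gamma$ produces
\[
(\V_E,\X_h+\Sig(\W_h)\N)_E=\langle\V_E,\X_h-\Lam\rangle+(\Sig(\W_h-\U),\Eps(\V_E))_{K(E)}+(\F+\Div\Sig(\W_h),\V_E)_{K(E)}.
\]
Summing in $E$, the left-hand side dominates $\sum_E h_E\|\phi_E\|_{0,E}^2$ up to a constant. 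The three terms on the right are then estimated by: duality, $\langle\V,\X_h-\Lam\rangle\le\|\V\|_{\frac12,\Gamma}\|\X_h-\Lam\|_{-\frac12,\Gamma}$; Cauchy--Schwarz in the energy norm, $(\Sig(\W_h-\U),\Eps(\V_E))_{K(E)}\le\enorm{\U-\W_h}_{K(E)}\enorm{\V_E}$; and, for the interior residual, splitting $\F+\Div\Sig(\W_h)=(\F-\F_h)+(\F_h+\Div\Sig(\W_h))$ and invoking the classical interior-bubble efficiency bound $h_K\|\F_h+\Div\Sig(\W_h)\|_{0,K}\lesssim\enorm{\U-\W_h}_K+\mathrm{osc}_K(\F)$.

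A final Cauchy--Schwarz over $E$, together with the scaling of $\V_E$, casts every right-hand side contribution in the form $X\cdot\bigl(\sum_E h_E\|\phi_E\|_{0,E}^2\bigr)^{1/2}$ with $X\lesssim\|(\U-\W_h,\Lam-\X_h)\|+\bigl(\sum_E\mathrm{osc}_{K(E)}(\F)^2\bigr)^{1/2}$; dividing by the common factor gives the claim. The main obstacle is precisely the interior residual that the integration by parts dumps into $K(E)$: bounding it forces us to call on a second, interior bubble-function argument of Verf\"urth type. Keeping the $h_E$-powers consistent throughout so that the absorption step leaves exactly $L^2$-weighted stabilization on the left and the natural error norm on the right is the only point that requires care; the rest is bookkeeping.
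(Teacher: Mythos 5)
Your proposal is correct and follows essentially the same route as the paper's proof: a facet-bubble test function $\bs{\tau}_E = h_E b_E(\X_h + \Sig(\W_h)\N)$ supported in $K(E)$, integration by parts back into the element so that the strong equations $-\Div\Sig(\U)=\F$ and $\Lam+\Sig(\U)\N=\Z$ can be invoked, and a final appeal to the classical Verf\"urth interior-bubble bound to convert the resulting element residual $h_{K}\|\F_h+\Div\Sig(\W_h)\|_{0,K}$ into $\enorm{\U-\W_h}_K + \mathrm{osc}_K(\F)$. The only cosmetic difference is that you work element-by-element directly from the PDE whereas the paper substitutes $\V=\bs{\tau}$, $\Mu=\Lam$ into the variational inequality of Problem~\ref{prob:weak} and manipulates the resulting global identity; the two are equivalent and yield the same ``master identity'' and the same final estimate.
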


\begin{proof}
Let $b_E : E \rightarrow [0, 1]$, $b_E \in P_d(E)$, denote the facet bubble with $b_E=1$ in the middle of $E$ and $b_E=0$ on the boundary $\partial E$. 
Then we have
\begin{equation}
\label{eq:bubble}
h_E \| \X_h + \Sig(\W_h)\N \|_{0,E}^2 \lesssim (\X_h + \Sig(\W_h)\N, \bs{\tau}_E)_E
\end{equation}
where
\[
\bs{\tau}_E|_E = b_E h_E ( \X_h + \Sig(\W_h)\N) \quad \text{and} \quad \bs{\tau}_E |_{\partial K(E) \setminus E} = \Z.
\]
Choosing $\V = \bs{\tau}$ in Problem~\ref{prob:weak} with $\bs{\tau} = \sum_{E \in \Gh} \bs{\tau}_E$ gives
\[
0 = -(\Sig(\U), \Eps(\bs{\tau})) - b(\bs{\tau}, \Lam) + (\F, \bs{\tau}).
\]
Summing \eqref{eq:bubble} over the edges $E \in \Gh$ and using the above equality gives
\begin{align*}
&\sum_{E \in \Gh} h_E \| \X_h + \Sig(\W_h)\N \|_{0,E}^2 \\ &\lesssim (\X_h + \Sig(\W_h)\N, \bs{\tau})_\Gamma\\
&= (\X_h, \bs{\tau})_\Gamma + (\Sig(\W_h)\N, \bs{\tau})_\Gamma -(\Sig(\U), \Eps(\bs{\tau})) - b( \bs{\tau}, \Lam) + (\F, \bs{\tau})  \\
&= b( \bs{\tau}, \X_h - \Lam) + (\Sig(\W_h)\N, \bs{\tau})_\Gamma -(\Sig(\U), \Eps(\bs{\tau})) + (\F, \bs{\tau})  \\
&= b( \bs{\tau}, \X_h - \Lam ) + (\Sig(\W_h - \U), \Eps(\bs{\tau})) + (\Div \Sig(\W_h) + \F, \bs{\tau}). 
\end{align*}
We conclude the proof by estimating the terms on the right-hand side using the Cauchy--Schwarz and trace inequalities, the inverse estimate
\[
\|\bs{\tau}\|_{1}^2 \lesssim \sum_{E \in \Gh} h_E^{-2} \| \bs{\tau}_E \|_{0,E}^2,
\]
and the standard estimates for interior residuals~\cite{VerfurthBook} which give rise to the oscillation term.
\end{proof}

Based on Lemma \ref{lem:resbound}, we will now prove the quasi-optimality of the method. We are deliberately not touching the subject of a priori bounds since we wish to avoid making assumptions on the regularity of the solution (see also Remark~4.1 in \cite{GSV2020}).
\begin{thm}[quasi-optimality]
Suppose that $0 < \alpha < C_I$.
For any $(\W_h, \X_h) \in \bs{V}_h \times \bs{\varLambda}_h$ it holds
\begin{align*}
\|(\U - \U_h, \Lam - \Lam_h)\| &\lesssim \|(\U - \W_h, \Lam - \X_h) \| + \sqrt{( u_n - g, \xi_{h,n})_\Gamma} \\
& \quad + \sqrt{\langle\U_t, \Lam_t - \X_{h,t}\rangle} + \left(\sum_{E \in \Gh} \mathrm{osc}_{K(E)}(\F)^2 \right)^{1/2}.
\end{align*}
\end{thm}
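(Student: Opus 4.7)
The plan follows the standard roadmap for stabilized methods applied to variational inequalities: split off the interpolation error with a triangle inequality, produce a test function via discrete stability, and then use the discrete VI together with the continuous KKT/friction conditions to re-express the result as a sum of approximation, slackness, and oscillation terms. Setting $\bs{e}_h = \W_h - \U_h$ and $\bs{\eta}_h = \X_h - \Lam_h$, the triangle inequality together with the embedding $\|\cdot\| \leq \|\cdot\|_h$ on $\bs{V}_h \times \bs{Q}_h$ reduces everything to bounding $\|(\bs{e}_h, \bs{\eta}_h)\|_h$. Theorem~\ref{thm:disc} then furnishes $\V_h^* \in \bs{V}_h$ with $\Bf_h(\bs{e}_h, \bs{\eta}_h; \V_h^*, -\bs{\eta}_h) \geq C_1 \|(\bs{e}_h, \bs{\eta}_h)\|_h^2$ and $\enorm{\V_h^*} \leq C_2 \|(\bs{e}_h, \bs{\eta}_h)\|_h$.

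Next, linearity in the multiplier slot gives
\[
\Bf_h(\bs{e}_h, \bs{\eta}_h; \V_h^*, -\bs{\eta}_h) = \Bf_h(\W_h, \X_h; \V_h^*, \Lam_h - \X_h) - \Bf_h(\U_h, \Lam_h; \V_h^*, \Lam_h - \X_h),
\]
and Problem~\ref{prob:stab} tested with $\Mu_h = \X_h \in \bs{\varLambda}_h$ yields $\Bf_h(\U_h, \Lam_h; \V_h^*, \Lam_h - \X_h) \geq \Lf(\V_h^*, \Lam_h - \X_h)$. I then replace $(\F, \V_h^*)$ inside $\Lf$ by $(\Sig(\U), \Eps(\V_h^*)) + b(\V_h^*, \Lam)$ (the strong form obtained by testing Problem~\ref{prob:weak} with $\V \in \bs{V}$, $\Mu = \Lam$), expand $\Bf_h(\W_h, \X_h; \V_h^*, \cdot)$, and regroup into an elastic defect $(\Sig(\W_h - \U), \Eps(\V_h^*))$, a duality term $b(\V_h^*, \X_h - \Lam)$, the bracket $b(\W_h, \Lam_h - \X_h) - \langle g, \lambda_{h,n} - \xi_{h,n}\rangle$, and the stabilization remainder. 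Adding and subtracting $\U$ in the bracket rewrites it as $b(\W_h - \U, \Lam_h - \X_h) + \langle u_n - g, \lambda_{h,n} - \xi_{h,n}\rangle + \langle \U_t, \Lam_{h,t} - \X_{h,t}\rangle$; normal complementarity ($u_n \leq g$ a.e., $\lambda_{h,n} \geq 0$) discards $\langle u_n - g, \lambda_{h,n}\rangle \leq 0$, and testing Problem~\ref{prob:weak} with $\Mu = (\lambda_n, \Lam_{h,t}) \in \bs{\varLambda}$ discards $\langle \U_t, \Lam_{h,t} - \Lam_t\rangle \leq 0$, leaving exactly the slackness contributions $-\langle u_n - g, \xi_{h,n}\rangle$ and $\langle \U_t, \Lam_t - \X_{h,t}\rangle$ foretold by the statement.

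Finally, each surviving piece is bounded by Cauchy--Schwarz combined with the $H^{-1/2}(\Gamma)$-duality for $b(\cdot,\cdot)$ and the continuous trace inequality for $\V_h^*$; the stabilization remainder is split and controlled via Lemma~\ref{lem:resbound} (which generates the oscillation contribution $\sum_E \mathrm{osc}_{K(E)}(\F)^2$) together with Lemma~\ref{lem:trace}. Using $\enorm{\V_h^*} \leq C_2 \|(\bs{e}_h, \bs{\eta}_h)\|_h$ throughout and a Young's inequality with a sufficiently small parameter absorbs every factor linear in $\|(\bs{e}_h, \bs{\eta}_h)\|_h$ into the left-hand side; taking square roots and combining with the triangle-inequality step closes the argument. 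The delicate part is the algebraic reorganization in the previous paragraph: one must bring in $(\U, \Lam)$ through the strong form and exercise the continuous frictional VI in exactly the right test directions so that the two nonnegative slackness terms survive, while tracking signs carefully so that only nonpositive remainders are discarded.
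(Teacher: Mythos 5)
Your proposal follows essentially the same route as the paper's proof: discrete stability to produce the test function, the discrete VI to pass to the data plus $\Bf_h(\W_h,\X_h;\ldots)$, the continuous problem to eliminate $(\F,\V_h^*)$ and identify the slackness terms (discarding $\langle u_n-g,\lambda_{h,n}\rangle\leq 0$ and $\langle\U_t,\Lam_{h,t}-\Lam_t\rangle\leq 0$ by conformity), Lemma~\ref{lem:resbound} and Lemma~\ref{lem:trace} for the stabilization remainder, and absorption plus a triangle inequality to finish. The only substantive differences are cosmetic — a global sign flip in your choice of $(\bs{e}_h,\bs{\eta}_h)$ and a more explicit statement of the test directions in the continuous VI — so the argument is correct and matches the paper.
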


\begin{proof}
    Let $\V_h \in \bs{V}_h$ be, according to Theorem~\ref{thm:disc}, such that
    \begin{equation}
        \label{eq:discstabthm}
        \enorm{\V_h} \lesssim \|(\U_h - \W_h, \Lam_h - \X_h)\|_h.
    \end{equation}
    From the discrete problem \ref{discprob} it follows that
    \[
        \Bf_h(\U_h,\Lam_h;\V_h,\X_h-\Lam_h) \leq \Lf(\V_h,\X_h - \Lam_h)\quad \forall \X_h\in \bs{\varLambda}_h,
    \]
    and, consequently,
    \begin{align*}
        &\|(\U_h - \W_h, \Lam_h - \X_h)\|^2_h \\
        &\lesssim \Bf_h(\U_h - \W_h, \Lam_h - \X_h; \V_h, \X_h - \Lam_h) \\
        &\leq \Lf(\V_h, \X_h - \Lam_h) - \Bf_h(\W_h, \X_h; \V_h, \X_h - \Lam_h) \\
        &=\Lf(\V_h, \X_h - \Lam_h) + \Bf(\U - \W_h, \Lam - \X_h; \V_h, \X_h - \Lam_h) \\
        &\phantom{=} - \Bf(\U,\Lam;\V_h,\X_h-\Lam_h) + \alpha \sum_{E \in \Gh} h_E(\X_h + \Sig(\W_h)\N, \X_h - \Lam_h + \Sig(\V_h)\N)_E.
    \end{align*}
    The second term above is bounded using the continuity of $\Bf$.
    The first and the third terms are simplified using the
    continuous problem as follows:
    \begin{align*}
        &\Lf(\V_h, \X_h - \Lam_h) - \Bf(\U, \Lam; \V_h, \X_h - \Lam_h) \\
        &=(\F,\V_h)-(\Sig(\U),\Eps(\V_h))-b( \V_h, \Lam ) + \langle g,\xi_{h,n} - \lambda_{h,n} \rangle -b( \U, \X_h - \Lam_h ) \\
        &=\langle g,\xi_{h,n} - \lambda_{h,n} \rangle -b( \U, \X_h - \Lam_h ) \\
        &=(g-u_n, \xi_{h,n} - \lambda_{h,n})_\Gamma + \langle \U_t, \Lam_{h,t} - \X_{h,t}\rangle\\
        &\leq (g-u_n, \xi_{h,n} )_\Gamma + (\U_t, \Lam_t - \X_{h,t})_\Gamma,
    \end{align*}
    where the last inequality follows from the fact that we consider a conforming approximation, i.e.~$\lambda_{h,n} \geq 0$ and
    \[
    \langle \U_t, \Lam_{h,t} - \X_{h,t} \rangle = \langle \U_t, \Lam_t - \X_{h,t} \rangle + \langle \U_t, \Lam_{h,t} - \Lam_t \rangle \leq \langle \U_t, \Lam_t - \X_{h,t} \rangle.
    \]
    Finally, we bound the terms due to stabilization as follows:
    \begin{align*}
    &\sum_{E \in \Gh} h_E(\X_h + \Sig(\W_h)\N, \X_h - \Lam_h + \Sig(\V_h)\N)_E\\
    &\lesssim \left(\sum_{E \in \Gh} h_E \|\X_h + \Sig(\W_h)\N\|_{0,E}^2 \right)^{1/2} \left(\sum_{E \in \Gh} h_E \|\X_h - \Lam_h + \Sig(\V_h)\N\|_{0,E}^2 \right)^{1/2} \\
        &\lesssim \left(\sum_{E \in \Gh} h_E \|\X_h + \Sig(\W_h)\N\|_{0,E}^2 \right)^{1/2}\\ &\qquad \cdot \left(\sum_{E \in \Gh} h_E \|\X_h - \Lam_h\|_{0,E}^2 + \sum_{E \in \Gh} h_E \| \Sig(\V_h)\N\|_{0,E}^2 \right)^{1/2} \\
        &\lesssim \left(\sum_{E \in \Gh} h_E \|\X_h + \Sig(\W_h)\N\|_{0,E}^2 \right)^{1/2} \|(\U_h - \W_h, \Lam_h - \X_h)\|_h
    \end{align*}
    where we have used the Cauchy--Schwarz and the triangle inequalities, estimate \eqref{eq:discstabthm} and Lemma \ref{lem:trace}.
    Now, the result follows taking into account Lemma~\ref{lem:resbound} and the trivial bound
    \[
    \|(\U_h - \W_h, \Lam_h - \X_h)\| \leq \|(\U_h - \W_h, \Lam_h - \X_h)\|_h,
    \]
    and using the triangle inequality.
\end{proof}

Next we define the total error indicator
\[
\eta^2 = \sum_{K \in \Th} \eta_K^2 + \sum_{E \in \Eh} \eta_{E, \Omega}^2 + \sum_{E \in \Fh} \eta_{E,\Gamma_N}^2 + \sum_{E \in \Gh} \eta_{E, \Gamma}^2
\]
where the local indicators are given by
\begin{alignat*}{2}
\eta_K^2 &= h_K^2 \| \Div \Sig(\U_h) + \F \|_{0,K}^2, \quad &&K \in \Th,\\
\eta_{E,\Omega}^2 &= h_E \| \llbracket \Sig(\U_h) \N \rrbracket \|_{0,E}^2, \quad &&E \in \Eh,\\
\eta_{E,\Gamma_N}^2 &= h_E \| \Sig(\U_h)\N \|_{0,E}^2, \quad &&E \in \Fh,\\
\eta_{E,\Gamma}^2 &= h_E \| \Lam_h + \Sig(\U_h)\N \|_{0,E}^2, \quad &&E \in \Gh.
\end{alignat*}
Moreover, we define the additional terms at the contact boundary as
\[
S^2 =  \| (g - u_{h,n})_-\|_{0,\Gamma}^2+((g - u_{h,n})_+,\lambda_{h,n})_\Gamma + \int_\Gamma (\kappa | \U_{h,t} | - \U_{h,t} \cdot \Lam_{h,t})\,\mathrm{d}s.
\]

\begin{thm}[a posteriori error estimate]
    \[
    \|(\U-\U_h, \Lam-\Lam_h)\| \lesssim \eta + S
    \]
\end{thm}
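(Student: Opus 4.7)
The plan is to apply the continuous stability theorem to the error pair $(\U - \U_h, \Lam - \Lam_h)$ and split the resulting bilinear expression into a residual part (controlled by $\eta$) and a consistency part (controlled by $S$). Write $E = \|(\U - \U_h, \Lam - \Lam_h)\|$. First, continuous stability will produce a $\V \in \bs{V}$ with $\enorm{\V} \lesssim E$ such that $C_1 E^2 \leq (\Sig(\U - \U_h), \Eps(\V)) + b(\V, \Lam - \Lam_h) - b(\U - \U_h, \Lam - \Lam_h)$. Since $\bs{V}$ is linear, testing Problem~\ref{prob:weak} with $\Mu = \Lam$ and both signs of $\V$ yields the continuous equilibrium equation $(\Sig(\U), \Eps(\V)) + b(\V, \Lam) = (\F, \V)$; subtracting it collapses the first two terms into the stabilized residual $R(\V) := (\F, \V) - (\Sig(\U_h), \Eps(\V)) - b(\V, \Lam_h)$.

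Next I will bound $R(\V) \lesssim \eta E$. Decompose $\V = I_h\V + (\V - I_h\V)$ with $I_h\V \in \bs{V}_h$ a Cl\'ement-type interpolant. On $\V - I_h\V$, element-wise integration by parts (together with the boundary contribution arising from $-b(\V, \Lam_h)$) produces the interior residuals $\F + \Div\Sig(\U_h)$, jumps $\llbracket \Sig(\U_h)\N \rrbracket$ on $\Eh$, Neumann tractions $\Sig(\U_h)\N$ on $\Fh$, and the contact residual $\Lam_h + \Sig(\U_h)\N$ on $\Gh$; Cauchy--Schwarz paired with standard Cl\'ement approximation estimates bounds each term by the corresponding indicator times $\enorm{\V}$. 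On $I_h\V$, Problem~\ref{prob:stab} tested with $\Mu_h = \Lam_h$ (upgraded to an equality by the linearity of the discrete equation in $\bs{V}_h$) reduces $R(I_h\V)$ to the stabilization contribution $-\alpha\sum_{E \in \Gh} h_E (\Lam_h + \Sig(\U_h)\N, \Sig(I_h\V)\N)_E$, controlled via Cauchy--Schwarz and Lemma~\ref{lem:trace} by $\eta_{E,\Gamma}\cdot \enorm{I_h\V}$. Since both $\enorm{I_h\V}$ and $\enorm{\V - I_h\V}$ are $\lesssim E$, the bound $R(\V) \lesssim \eta E$ follows.

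For the consistency term $-b(\U - \U_h, \Lam - \Lam_h)$, I will use the continuous complementarity $\langle u_n - g, \lambda_n\rangle = 0$, the friction identity $\langle \U_t, \Lam_t\rangle = (\kappa, |\U_t|)_\Gamma$, and the discrete feasibility $\lambda_{h,n}\geq 0$, $|\Lam_{h,t}|\leq\kappa$. The tangential part, using $|\Lam_{h,t}|, |\Lam_t| \leq \kappa$ and pointwise Cauchy--Schwarz, reduces to $\int_\Gamma(\kappa|\U_{h,t}| - \U_{h,t}\cdot\Lam_{h,t})\,\mathrm{d}s = S_3$. The normal part, after adding and subtracting $g$ and using $u_n \leq g$ and $\lambda_{h,n}\geq 0$, is bounded by $\langle g - u_{h,n}, \lambda_{h,n}\rangle + \langle u_{h,n} - g, \lambda_n\rangle$; splitting each by the sign of $g - u_{h,n}$ contributes $((g - u_{h,n})_+, \lambda_{h,n})_\Gamma = S_2$ and $((u_{h,n} - g)_+, \lambda_n)_\Gamma \lesssim S_1 = \|(g - u_{h,n})_-\|_{0,\Gamma}$ (this last bound by Cauchy--Schwarz or duality, absorbing $\|\lambda_n\|_{-1/2,\Gamma}$, or $\|\lambda_n\|_{0,\Gamma}$ under the extra regularity noted in Remark~2, into the implicit constant).

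Collecting the three steps then yields $C_1 E^2 \lesssim \eta E + S_1 + S_2 + S_3$, and Young's inequality on $\eta E$ together with the elementary bound $S_1 + S_2 + S_3 \lesssim S + S^2$ produces $E \lesssim \eta + S$. The hard part will be controlling $((u_{h,n} - g)_+, \lambda_n)_\Gamma$: since $\lambda_n$ only belongs to $H^{-1/2}(\Gamma)$ in general, the Cauchy--Schwarz (or duality) step must absorb the bounded continuous-solution quantity $\|\lambda_n\|$ into the implicit constant. Handling this cleanly is what allows one to deduce the estimate without the extra regularity or saturation assumptions used in the analyses referenced in the introduction.
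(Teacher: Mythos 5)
Your overall architecture is the right one and matches the paper's: invoke continuous stability to get a test function $\V$ with $\enorm{\V} \lesssim E := \|(\U-\U_h,\Lam-\Lam_h)\|$, split the resulting bilinear expression into a residual piece $R(\V)$ and a consistency piece $-b(\U-\U_h,\Lam-\Lam_h)$, use the discrete equation together with a Cl\'ement interpolant and element-wise integration by parts to bound $R(\V)\lesssim\eta\,E$, and bound the consistency piece by the terms in $S$. Your treatment of $R(\V)$ via the split $\V=I_h\V+(\V-I_h\V)$, using the equality form of the discrete problem on $I_h\V$, is equivalent to what the paper does (it tests \eqref{discprob} with $\V_h=-\widetilde{\V}$, $\Mu_h=\Lam_h$ and adds the result). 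Your tangential consistency bound is also correct and reproduces the paper's integral term exactly.

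The genuine gap is in the normal consistency term. After expanding, you arrive at $\langle g-u_{h,n},\lambda_{h,n}\rangle + \langle u_{h,n}-g,\lambda_n\rangle$, then split each of the two terms by the sign of $g-u_{h,n}$ \emph{separately} and discard the favorable pieces, ending with $\langle(g-u_{h,n})_+,\lambda_{h,n}\rangle + \langle(g-u_{h,n})_-,\lambda_n\rangle$. The second term involves the unknown $\lambda_n$, and the only way to close your argument is to absorb $\|\lambda_n\|$ into the implicit constant. This fails the paper's stated goal of proving the estimate \emph{without} extra regularity ($\lambda_n$ is only in $H^{-1/2}(\Gamma)$, so you cannot pair it with the $L^2$-quantity $(g-u_{h,n})_-$ unless you assume more) and yields a solution-dependent constant. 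Moreover, even granting the absorption, the resulting term is \emph{linear} in $S_1=\|(g-u_{h,n})_-\|_{0,\Gamma}$, not of the form $S_1\cdot E$ or $S_1^2$, so the Young's-inequality step at the end gives $E^2\lesssim\eta^2+S_1+S_2+S_3$, i.e.~$E\lesssim\eta+\sqrt{S}$-type behavior rather than the asserted $E\lesssim\eta+S$.

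The paper avoids all of this by \emph{not} splitting $\lambda_{h,n}-\lambda_n$: it writes
\[
\langle g-u_{h,n},\lambda_{h,n}-\lambda_n\rangle
= \langle(g-u_{h,n})_-,\lambda_{h,n}-\lambda_n\rangle + \langle(g-u_{h,n})_+,\lambda_{h,n}-\lambda_n\rangle,
\]
bounds the first inner product by $\|(g-u_{h,n})_-\|_{1/2,\Gamma}\,\|\lambda_{h,n}-\lambda_n\|_{-1/2,\Gamma}$, and observes that the second factor is bounded by $E$; so this piece is of the product form and gets absorbed by Young's inequality into the left-hand side, leaving only $\|(g-u_{h,n})_-\|_{1/2,\Gamma}^2$ in the estimator. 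The second inner product drops $-\langle(g-u_{h,n})_+,\lambda_n\rangle\leq 0$ (both factors nonnegative) and keeps $((g-u_{h,n})_+,\lambda_{h,n})_\Gamma$. In your notation, the fix is: do not discard $-\langle(g-u_{h,n})_-,\lambda_{h,n}\rangle$; instead pair it with $\langle(g-u_{h,n})_-,\lambda_n\rangle$ to form $-\langle(g-u_{h,n})_-,\lambda_{h,n}-\lambda_n\rangle$ and apply the $H^{1/2}$--$H^{-1/2}$ duality. That recovers the clean $E^2\lesssim\eta\,E + \|(g-u_{h,n})_-\|_{1/2,\Gamma}\,E + S_2 + S_3$ and hence $E\lesssim\eta+S$, with no extra regularity and no solution-dependent constants.
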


\begin{proof}
In view of continuous stability there exists $\V \in \bs{V}$ such that
\begin{align*}
    &\| (\U - \U_h, \Lam - \Lam_h) \|^2\\
    &\lesssim \Bf(\U - \U_h, \Lam - \Lam_h; \V, \Lam_h - \Lam)\\
    &=\Bf(\U, \Lam, \V, \Lam_h - \Lam) - \Bf(\U_h, \Lam_h; \V, \Lam_h - \Lam) \\
    &\leq \Lf(\V, \Lam_h - \Lam) - \Bf(\U_h, \Lam_h; \V, \Lam_h - \Lam),
\end{align*}
where in the last inequality we have used Problem \ref{prob:weak}.
Let now $\widetilde{\V}$ be the Cl\'ement interpolant of $\V$. From the discrete problem \eqref{discprob}, it follows that
\[
0 \leq - \Bf(\U_h, \Lam_h; -\widetilde{\V}, \Z) + \Lf(-\widetilde{\V}, \Z) + \alpha \sum_{E \in \Gh} h_E (\Lam_h + \Sig(\U_h)\N, \Sig(-\widetilde{\V})\N)_E.
\]
Using the above inequality and integration by parts, we get
\begin{align*}
    &\| (\U - \U_h, \Lam - \Lam_h) \|^2\\
    &\lesssim \Lf(\V - \widetilde{\V}, \Lam_h - \Lam) - \Bf(\U_h, \Lam_h; \V - \widetilde{\V}, \Lam_h - \Lam) \\
    &\phantom{=}~+ \alpha \sum_{E \in \Gh} h_E (\Lam_h + \Sig(\U_h)\N, \Sig(-\widetilde{\V})\N)_E \\
    &=\sum_{K \in \Th}( \Div \Sig(\U_h) + \F, \V - \widetilde{\V})_K - \sum_{E \in \Eh} (\llbracket \Sig(\U_h)\N \rrbracket, \V - \widetilde{\V})_E \\
    &\phantom{=}~- \sum_{E \in \Fh} ( \Sig(\U_h)\N , \V - \widetilde{\V})_E - \sum_{E \in \Gh}(\Lam_h + \Sig(\U_h)\N, \V - \widetilde{\V})_E \\ 
    &\phantom{=}~- b( \U_h, \Lam_h - \Lam) +\langle g, \lambda_{h,n} - \lambda_n \rangle + \alpha \sum_{E \in \Gh} h_E (\Lam_h + \Sig(\U_h)\N, \Sig(-\widetilde{\V})\N)_E.
\end{align*}
The first four terms are bounded using the Cauchy--Schwarz inequality, continuous stability estimate,
and the standard Clem\'ent interpolation estimate:
\[
\sum_{K \in \Th} h_K^{-2} \| \V - \widetilde{\V} \|_{0,K}^2 + \sum_{E \in \Eh \cup \Fh \cup \Gh} h_E^{-1} \| \V - \widetilde{\V} \|_{0,E}^2 \lesssim  \enorm{\V}^2.
\]
The last term is estimated using the Cauchy--Schwarz inequality, Lemma~\ref{lem:trace},
and the bound $\enorm{\widetilde{\V}} \lesssim \enorm{\V}$.
The remaining terms are estimated as follows:
\begin{align*}
   & -b(\U_h, \Lam_h - \Lam) +\langle g, \lambda_{h,n} - \lambda_n \rangle \\
   &= \langle g - u_{h,n}, \lambda_{h,n} - \lambda_n \rangle - \langle \U_{h,t}, \Lam_{h,t} - \Lam_t \rangle \\
   &= \langle (g - u_{h,n})_-, \lambda_{h,n} - \lambda_n \rangle + \langle (g - u_{h,n})_+, \lambda_{h,n} - \lambda_n \rangle \\
   &\phantom{=}~- ( \U_{h,t}, \Lam_{h,t} )_\Gamma + \langle \U_{h,t}, \Lam_t \rangle \\
   &\leq \| (g - u_{h,n})_- \|_{1/2,\Gamma} \|\lambda_{h,n} - \lambda_n\|_{-1/2,\Gamma} + ((g - u_{h,n})_+,\lambda_{h,n})_\Gamma \\
   &\phantom{\leq}~+ \int_\Gamma (\kappa | \U_{h,t} | - \U_{h,t} \cdot \Lam_{h,t})\,\mathrm{d}s.
\end{align*}

\end{proof}

The proof of the following theorem is standard~\cite{VerfurthBook}; see also \cite{GSV2017}.
\begin{thm}[efficiency of the error indicator]
\[
\eta \lesssim \|(\U - \U_h, \Lam - \Lam_h)\| + \left(\sum_{E \in \Gh} \mathrm{osc}_{K(E)}(\F)^2 \right)^{1/2}.
\]
\end{thm}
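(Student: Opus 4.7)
The plan is to handle each of the four families of local indicators separately and then sum. The contact-boundary indicators are already controlled by Lemma~\ref{lem:resbound}: choosing $\W_h = \U_h$ and $\X_h = \Lam_h$ there gives directly
\[
\sum_{E \in \Gh}\eta_{E,\Gamma}^2 \lesssim \|(\U - \U_h, \Lam - \Lam_h)\|^2 + \sum_{E \in \Gh}\mathrm{osc}_{K(E)}(\F)^2,
\]
so the remaining task is to bound $\eta_K$, $\eta_{E,\Omega}$ and $\eta_{E,\Gamma_N}$ by the classical Verf\"urth bubble-function technique, which is the reason the theorem is stated as standard.

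For the interior residual on $K \in \Th$, I would take the usual polynomial interior bubble $b_K \in H^1_0(K)$ and set $\bs{w}_K = b_K(\Div\Sig(\U_h) + \F_h)$, extended by zero. Because $\bs{w}_K$ vanishes on $\partial K$, and hence on $\Gamma$, the pair $(\pm\bs{w}_K,\Lam)$ is admissible in Problem~\ref{prob:weak} and the variational inequality collapses to the equality $(\Sig(\U),\Eps(\bs{w}_K))_\Omega = (\F,\bs{w}_K)_\Omega$. Integrating by parts against $\Sig(\U_h)$ and applying the standard inverse estimates $\|b_K^{1/2} r\|_{0,K}^2 \gtrsim \|r\|_{0,K}^2$ and $\|\bs{w}_K\|_{1,K} \lesssim h_K^{-1}\|\Div\Sig(\U_h)+\F_h\|_{0,K}$ then yields $\eta_K \lesssim \enorm{\U - \U_h}_K + \mathrm{osc}_K(\F)$.

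For an interior edge $E \in \Eh$, I would take a facet bubble $b_E$, extend $\jump{\Sig(\U_h)\N}$ polynomially into the two adjacent elements and multiply by $b_E$ to obtain a test function vanishing on $\partial\Omega$. Integration by parts on the two adjacent elements, reuse of the already bounded interior residual, and the facet analogues of the bubble inverse estimates produce
\[
\eta_{E,\Omega}^2 \lesssim \sum_{K :\, E\subset\partial K}\bigl(\enorm{\U - \U_h}_K^2 + \mathrm{osc}_K(\F)^2\bigr).
\]
For a Neumann facet $E \in \Fh$ the same construction with $b_E$ supported in $K(E)$ gives a test function that vanishes on $\Gamma_D \cup \Gamma$; testing Problem~\ref{prob:weak} and using the continuous Neumann condition $\Sig(\U)\N = \Z$ on $\Gamma_N$ gives $\eta_{E,\Gamma_N}^2 \lesssim \enorm{\U - \U_h}_{K(E)}^2 + \mathrm{osc}_{K(E)}(\F)^2$. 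Summing the four estimates and absorbing the bounded overlap of element patches from shape regularity delivers the stated efficiency bound.

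The only point that actually requires attention, and the reason a contact/friction problem is not completely off-the-shelf here, is admissibility of the bubble test functions in the variational inequality. By construction every bubble $\bs{w}$ used above has trace zero on $\Gamma$, so $b(\bs{w},\Lam)=0$ and the pair $(\pm\bs{w},\Lam)$ can be inserted in Problem~\ref{prob:weak}, producing an equality rather than an inequality. Once this observation is secured, the contact and friction conditions play no role in the remainder of the argument and the proof reduces verbatim to the standard residual-based a posteriori efficiency proof of Verf\"urth.
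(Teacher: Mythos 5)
Your proposal is correct and is exactly the argument the paper has in mind: the contact-boundary indicator is controlled by Lemma~\ref{lem:resbound} with $\W_h=\U_h$, $\X_h=\Lam_h$, and the remaining three indicator families are handled by the standard Verf\"urth bubble technique, which goes through unchanged once one notes that the bubbles vanish on $\Gamma$ (so $b(\bs{w},\Lam)=0$ and the variational inequality, being unrestricted in its first argument, reduces to the usual weak equality $(\Sig(\U),\Eps(\bs{w}))_\Omega=(\F,\bs{w})_\Omega$). The paper omits this proof and cites \cite{VerfurthBook,GSV2017} precisely because the argument is the one you give; your closing paragraph correctly isolates the only point that distinguishes the contact/friction case from the purely elliptic one.
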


\section{Nitsche's method}

Due to the stabilization terms, the Lagrange multiplier can be
eliminated locally in each element.
Choosing $\V_h = \Z$ in the discrete problem gives
\begin{align*}
&(\U_h, \Mu_h - \Lam_h)_\Gamma - \alpha \sum_{E \in \Gh} h_E (\Lam_h + \Sig(\U_h)\N, \Mu_h - \Lam_h)_E\\
&\leq (g, \mu_{h,n} - \lambda_{h,n})_\Gamma \quad \forall \Mu_h \in \bs{\varLambda}_h.
\end{align*}
This inequality can be  decomposed into normal and tangential parts by considering the test function $\Mu_h = \mu_{h,n}\N + \Mu_{h,t}$. Choosing first $\Mu_{h,t} = \Lam_{h,t}$ and then $\mu_{h,n} = \lambda_{h,n}$ leads to the inequalities
\[
(u_{h,n} - g, \mu_{h,n} - \lambda_{h,n})_\Gamma - \alpha \sum_{E \in \Gh} h_E (\lambda_{h,n} + \sigma_n(\U_h), \mu_{h,n} - \lambda_{h,n})_E \leq 0
\]
and
\[
(\U_{h,t},\Mu_{h,t} - \Lam_{h,t})_\Gamma - \alpha \sum_{E \in \Gh} h_E (\Lam_{h,t} + \Sig_t(\U_h)\N, \Mu_{h,t} - \Lam_{h,t})_E \leq 0,
\]
valid for every $(\mu_{h,n}, \Mu_{h,t}) \in \bs{\varLambda}_h$.
Suppose now that $E \in \Gh$ is such that $|\Lam_{h,t}(\bs{x})| < \kappa$ for any $\bs{x} \in E$.  Then the test function
\[
\Mu_{h,t}(\bs{x}) = \begin{cases}
    \Lam_{h,t}(\bs{x}) \pm \varepsilon \bs{\varphi}_E(\bs{x}) & \text{if $\bs{x} \in E$,} \\
    \Lam_{h,t}(\bs{x}) & \text{otherwise,}
\end{cases}
\]
where $\bs{\varphi}_E$ is one of the basis functions of $\bs{Q}_h|_E$ and $\varepsilon > 0$ is sufficiently
small so that $|\Mu_{h,t}|<\kappa$,
gives
\[
\begin{cases}
(\U_{h,t} - \alpha h_E(\Lam_{h,t} + \Sig_t(\U_h)\N), \bs{\varphi}_E)_E \leq 0\\ 
(\U_{h,t} - \alpha h_E(\Lam_{h,t} + \Sig_t(\U_h)\N), -\bs{\varphi}_E)_E \leq 0,
\end{cases}
\]
implying that
\[
(\U_{h,t} - \alpha h_E(\Lam_{h,t} + \Sig_t(\U_h)\N), \bs{\varphi}_E)_E = 0.
\]
Thus, we obtain the expression
\[
\Lam_{h,t} = \bs{\gamma}_t(\U_h) := \frac{1}{\alpha \mathcal{H}}\pi_h \U_{h,t} - \pi_h \Sig_t(\U_h),
\]
where $\pi_h$ denotes the $L^2$-projection onto $\bs{Q}_h$ and $\mathcal{H} : \Gamma \rightarrow \mathbb{R}$, is such that $\mathcal{H}|_E = h_E$.
Taking into account the entire boundary $\Gamma$ and the case $|\Lam_{h,t}| = \kappa$, the expression for the tangential Lagrange multiplier then reads as follows
\begin{equation}
\label{eq:tanglag}
\Lam_{h,t} = \begin{cases}
    \bs{\gamma}_t(\U_h) & \text{if $|\bs{\gamma}_t(\U_h)|<\kappa$,} \\
    \kappa \frac{\bs{\gamma}_t(\U_h)}{|\bs{\gamma}_t(\U_h)|} & \text{otherwise.}
\end{cases}
\end{equation}
Taking similar steps to
eliminate the normal-directional Lagrange multiplier,
we arrive at the expression
\begin{equation}
\label{eq:normlag}
\lambda_{h,n} = \begin{cases}
    \gamma_n(\U_h) & \text{if $\gamma_n(\U_h) > 0$,} \\
    0 & \text{otherwise,}
\end{cases}
\end{equation}
where
\[
\gamma_n(\U_h) := \frac{1}{\alpha \mathcal{H}}( \pi_h u_{h,n} - \pi_h g) - \pi_h \sigma_n(\U_h).
\]

 Testing with $\V_h$ in the discrete problem \eqref{discprob} and using  expressions \eqref{eq:tanglag} and \eqref{eq:normlag}, leads to the variational equality
\begin{equation}
\label{eq:first}
\begin{aligned}
&(\Sig(\U_h), \Eps(\V_h))_\Omega \\
&+ b(\V_h, \Lam_h) - \alpha \sum_{E \in \Gh} h_E(\Lam_h + \Sig(\U_h)\N, \Sig(\V_h)\N)_E \\
&= (\F, \V_h)_\Omega \quad \forall \V_h \in \bs{V}_h.
\end{aligned}
\end{equation}
Focusing on the terms
\begin{equation}
\label{eq:nitscheterms}
b(\V_h, \Lam_h) - \alpha \sum_{E \in \Gh} h_E(\Lam_h + \Sig(\U_h)\N, \Sig(\V_h)\N)_E,
\end{equation}
we first observe that the normal-directional part of \eqref{eq:nitscheterms} reads as follows
\[
(\lambda_{h,n}, v_{h,n})_\Gamma - \alpha \sum_{E \in \Gh} h_E(\lambda_{h,n} + \sigma_n(\U_h), \sigma_n(\V_h))_E .
\]
In view of \eqref{eq:normlag} and the definition of $\gamma_n(\U_h)$, this can be written as
\begin{equation}
\label{eq:nitschenorm}
\begin{aligned}
&\left(\tfrac{1}{\alpha \mathcal{H}} u_{h,n}, v_{h,n} \right)_{\Gamma_C} - (\alpha \mathcal{H} \sigma_n(\U_h),\sigma_n(\V_h))_{\Gamma \setminus \Gamma_C}\\
&- (\sigma_n(\U_h), v_{h,n})_{\Gamma_C} - (u_{h,n},\sigma_n(\V_h))_{\Gamma_C} \\
&-\left(\tfrac{1}{\alpha \mathcal{H}} \pi_h g, v_{h,n}\right)_{\Gamma_C} +(\alpha \mathcal{H} \pi_h g, \sigma_n(\V_h))_{\Gamma_C}
\end{aligned}
\end{equation}
provided that $\bs{Q}_h$ has a large enough polynomial order so that $\pi_h \sigma_n(\U_h) = \sigma_n(\U_h)$ and $\pi_h u_{h,n} = u_{h,n}$.
Above, and in what follows,
\[
\Gamma_C = \Gamma_C(\U_h) = \{ \bs{x} \in \Gamma : \gamma_n(\U_h(\bs{x})) > 0 \}
\]
is defined as the active contact boundary.

The tangential part of \eqref{eq:nitscheterms} is
\[
(\Lam_{h,t}, \V_{h,t})_\Gamma - \alpha \sum_{E \in \Gh} h_E(\Lam_{h,t} + \Sig_t(\U_h), \Sig_t(\V_h))_E
\]
and, using \eqref{eq:tanglag}, it can  be expanded into
\begin{equation}
\label{eq:nitschetang}
\begin{aligned}
&\left(\tfrac{1}{\alpha \mathcal{H}} \U_{h,t}, \V_{h,t} \right)_{\Gamma_S} - (\alpha \mathcal{H} \Sig_t(\U_h),\Sig_t(\V_h))_{\Gamma \setminus \Gamma_S}\\
&- (\Sig_t(\U_h), \V_{h,t})_{\Gamma_S} - (\U_{h,t},\Sig_t(\V_h))_{\Gamma_S} \\
&+\left(\kappa \tfrac{\bs{\gamma}_t(\U_h)}{|\bs{\gamma}_t(\U_h)|}, \V_{h,t}\right)_{\Gamma \setminus \Gamma_S}-\left(\alpha \mathcal{H} \kappa \tfrac{\bs{\gamma}_t(\U_h)}{|\bs{\gamma}_t(\U_h)|}, \Sig_t(\V_h)\right)_{\Gamma \setminus \Gamma_S}
\end{aligned}
\end{equation}
where we have defined $\Gamma_S = \Gamma_S(\U_h) = \{ \bs{x} \in \Gamma : |\bs{\gamma}_t(\U_h(\bs{x}))| < \kappa \}$ as the part of $\Gamma$ where
the body sticks to the rigid foundation.

Combining \eqref{eq:first}, \eqref{eq:nitschenorm} and \eqref{eq:nitschetang} leads to a Nitsche-type formulation of the discrete problem
wherein the Lagrange multiplier is absent.  Note that the new formulation is still nonlinear
because the sets $\Gamma_C$ and $\Gamma_S$ depend on the solution $\U_h$ itself.
Inspired by the primal-dual active set strategy proposed in~\cite{W11}, i.e.~fixing the active sets
$\Gamma_C$ and $\Gamma_S$ by repeatedly limiting the  components of $\Lam_h$ that violate the contact/friction conditions,
we perform a fixed-point iteration where
$\Gamma_C$ and $\Gamma_S$ are computed from the discrete solution of the previous iteration.
The resulting method is summarized in Algorithm~\ref{alg:nitsche}.

\begin{algorithm}[H]
  \caption{Contact iteration}
  \label{alg:nitsche}
  \begin{algorithmic}[1]
    \Require{$\bs{V}_h$ is a finite element space}
    \Require{$\epsilon > 0$ is a convergence tolerance}
    \Statex
    \Procedure{Nitsche}{$\bs{V}_h$, $\epsilon$}
    \State $\W_h \gets \Z$
    \Repeat
        \State Find $\U_h \in \bs{V}_h$ such that
        \begin{align*}
            &(\Sig(\U_h), \Eps(\V_h))_\Omega \\
            &+\left(\tfrac{1}{\alpha \mathcal{H}} u_{h,n}, v_{h,n} \right)_{\Gamma_C(\W_h)} - (\alpha \mathcal{H} \sigma_n(\U_h),\sigma_n(\V_h))_{\Gamma \setminus \Gamma_C(\W_h)}\\
&- (\sigma_n(\U_h), v_{h,n})_{\Gamma_C(\W_h)} - (u_{h,n},\sigma_n(\V_h))_{\Gamma_C(\W_h)} \\
&+\left(\tfrac{1}{\alpha \mathcal{H}} \U_{h,t}, \V_{h,t} \right)_{\Gamma_S(\W_h)} - (\alpha \mathcal{H} \Sig_t(\U_h),\Sig_t(\V_h))_{\Gamma \setminus \Gamma_S(\W_h)}\\
&- (\Sig_t(\U_h), \V_{h,t})_{\Gamma_S(\W_h)} - (\U_{h,t},\Sig_t(\V_h))_{\Gamma_S(\W_h)} \\
&=(\F, \V_h)_\Omega \\
&\quad -\left(\tfrac{1}{\alpha \mathcal{H}} \pi_h g, v_{h,n}\right)_{\Gamma_C(\W_h)} +(\alpha \mathcal{H} \pi_h g, \sigma_n(\V_h))_{\Gamma_C(\W_h)}\\
&\quad +\left(\kappa \tfrac{\bs{\gamma}_t(\W_h)}{|\bs{\gamma}_t(\W_h)|}, \V_{h,t}\right)_{\Gamma \setminus \Gamma_S(\W_h)}\\
&\quad-\left(\alpha \mathcal{H} \kappa \tfrac{\bs{\gamma}_t(\W_h)}{|\bs{\gamma}_t(\W_h)|},\Sig_t(\V_h)\right)_{\Gamma \setminus \Gamma_S(\W_h)} \quad \forall \V_h \in \bs{V}_h
        \end{align*}
        \State $\W_h \gets \U_h$
    \Until{$\enorm{\U_h - \W_h} < \epsilon$}
    \State \Return{$\U_h$}
    \EndProcedure
  \end{algorithmic}
\end{algorithm}

\begin{rem}
\label{rem:inexact}
There are several possibilities for performing the comparisons \mbox{$\gamma_n(\U_h) > 0$} and $|\bs{\gamma}_t(\U_h)| < \kappa$
in a practical implementation of Algorithm~\ref{alg:nitsche}.
For example, the sign of $\gamma_n(\U_h)$ may change inside an element and, therefore,
an exact integration is unfeasible unless the active contact boundary $\Gamma_C$ is known a priori.
Some options include defining the active contact boundary element-by-element while looking
at the mean value of $\gamma_n(\U_h)$, or performing the comparison $\gamma_n(\U_h) > 0$
separately at each quadrature point.
\end{rem}

\begin{rem}
The stabilized method of Problem~\ref{prob:stab} can be implemented directly via
a primal-dual active set strategy as demonstrated in \cite{GSV2017} for the related obstacle problem.
Such an approach is straightforward to realize using a piecewise-constant or discontinuous piecewise-linear
Lagrange multiplier
and may be computationally less intensive than Algorithm~\ref{alg:nitsche} since it avoids the reassembly of the
terms \eqref{eq:nitschenorm} and \eqref{eq:nitschetang} during each iteration.
\end{rem}

\section{Numerical experiment}

Let us consider the domain $\Omega = (-0.5, 0.5)^2$ with $x=0.5$, $x=-0.5$ and $y=\pm 0.5$
corresponding to $\Gamma$, $\Gamma_D$ and $\Gamma_N$, respectively.
We set $\F=\Z$, $g=-0.1$, $E=1$, $\nu=0.3$, and $\kappa=0.2$.
The numerical experiment is implemented using scikit-fem~\cite{skfem2020}
which relies heavily on the SciPy ecosystem~\cite{scipy}.
The figures are created with the help of matplotlib~\cite{hunter2007}
and the full source code of the experiment is available in~\cite{zenodo}.

Using quadratic finite elements, a uniform mesh with $h \approx 0.044$ and $\alpha=10^{-3}$ for Algorithm~\ref{alg:nitsche}, we obtain 
the Lagrange multipliers depicted in Figure~\ref{fig:ex1}.  The absolute value of the tangential multiplier is limited by the bound $\kappa=0.2$, as expected,
and the normal multiplier remains positive on the entire contact boundary.
In the absence of an analytical solution, we evaluate the vectorial $H^1$-norm of the discrete displacement and the total error estimator $\eta$
for a sequence of uniform meshes; see Table~\ref{tab:ex1}.

In an attempt to improve over the uniform meshing strategy,
we solve the same problem using adaptive mesh refinement
and terminate the refinement loop after the number of
degrees-of-freedom $N$ is above a given threshold.
The results calculated using the final adaptive mesh are given in Figure~\ref{fig:ex1adapt}.
Visually, the improvement in the Lagrange multipliers is obvious although $N$ is roughly
the same as in Figure~\ref{fig:ex1}.
In addition, comparing the values of $\|\U_h\|_1$ in Table~\ref{tab:ex1} and \ref{tab:ex1adapt} reveal that the vectorial
$H^1$-norm of the solution is close to what would be expected
from the uniform mesh with 132,098 degrees-of-freedom.

A careful comparison of the values of $\eta$ in Table~\ref{tab:ex1} and \ref{tab:ex1adapt} reveal
that asymptotically the adaptive meshing appears to improve the convergence rate
to $O(N^{-1})$ which is the expected rate of convergence for a smooth solution and quadratic elements;
see also Figure~\ref{fig:ex1conv}.
Thus, we conclude that while the convergence rate of
the uniform strategy is limited by the regularity of the exact solution,
the adaptive strategy successfully regains the optimal convergence rate
with respect to the number of degrees-of-freedom.

\begin{figure}[h!]
    \centering
    \includegraphics[width=0.4\textwidth]{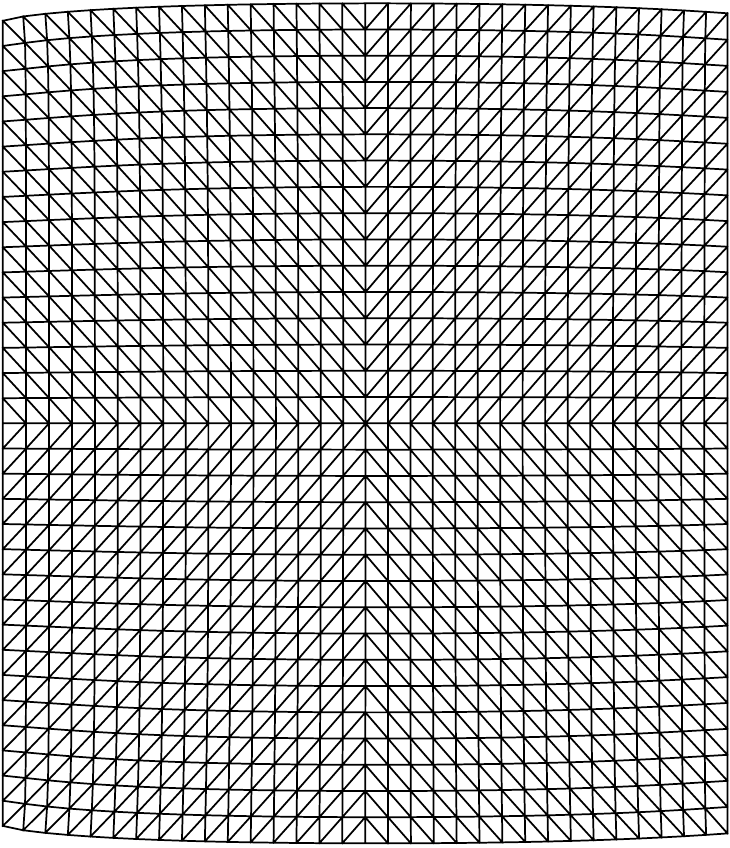}\\[0.5cm]
    \includegraphics[width=0.6\textwidth]{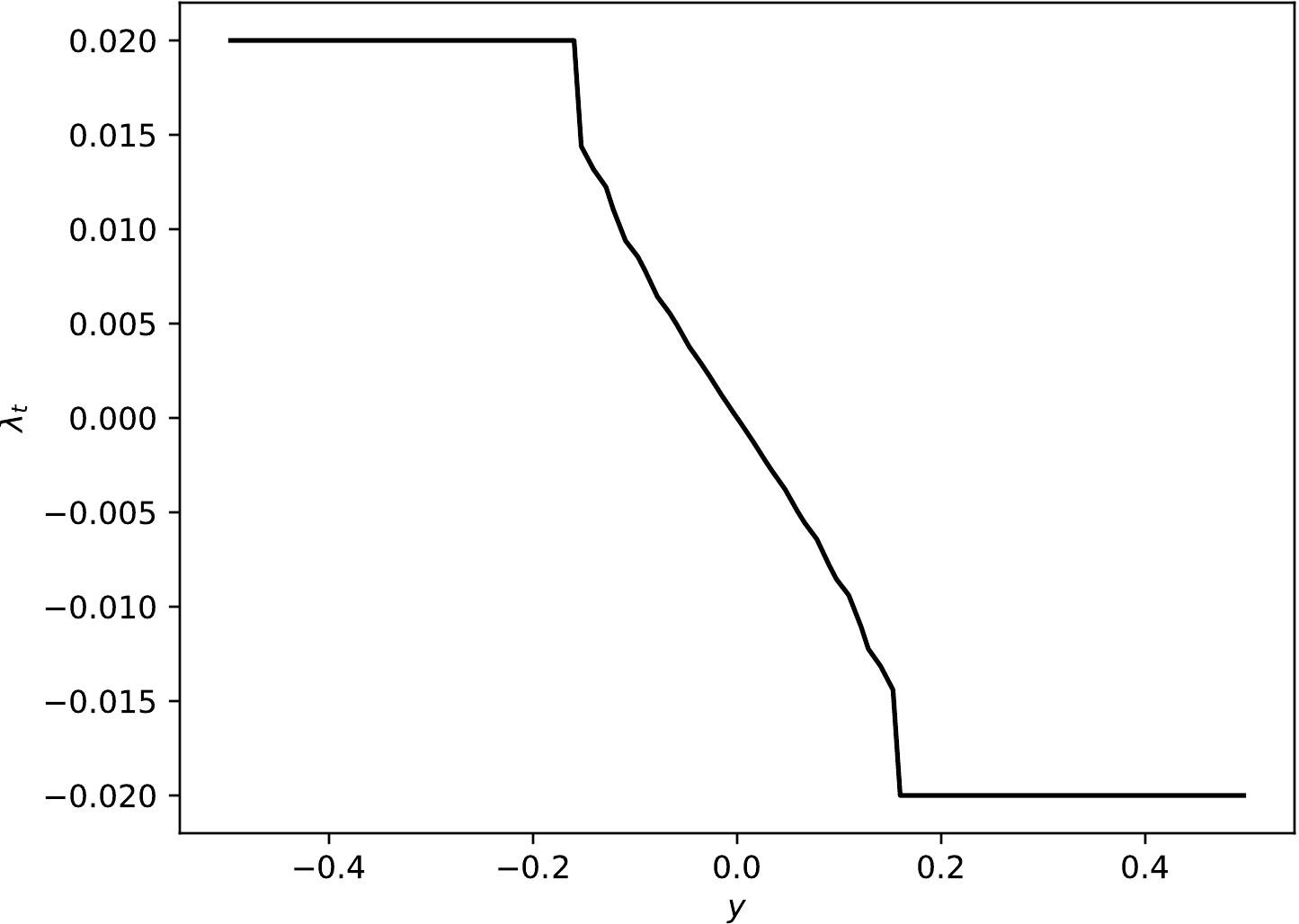}\\[0.5cm]
    \includegraphics[width=0.6\textwidth]{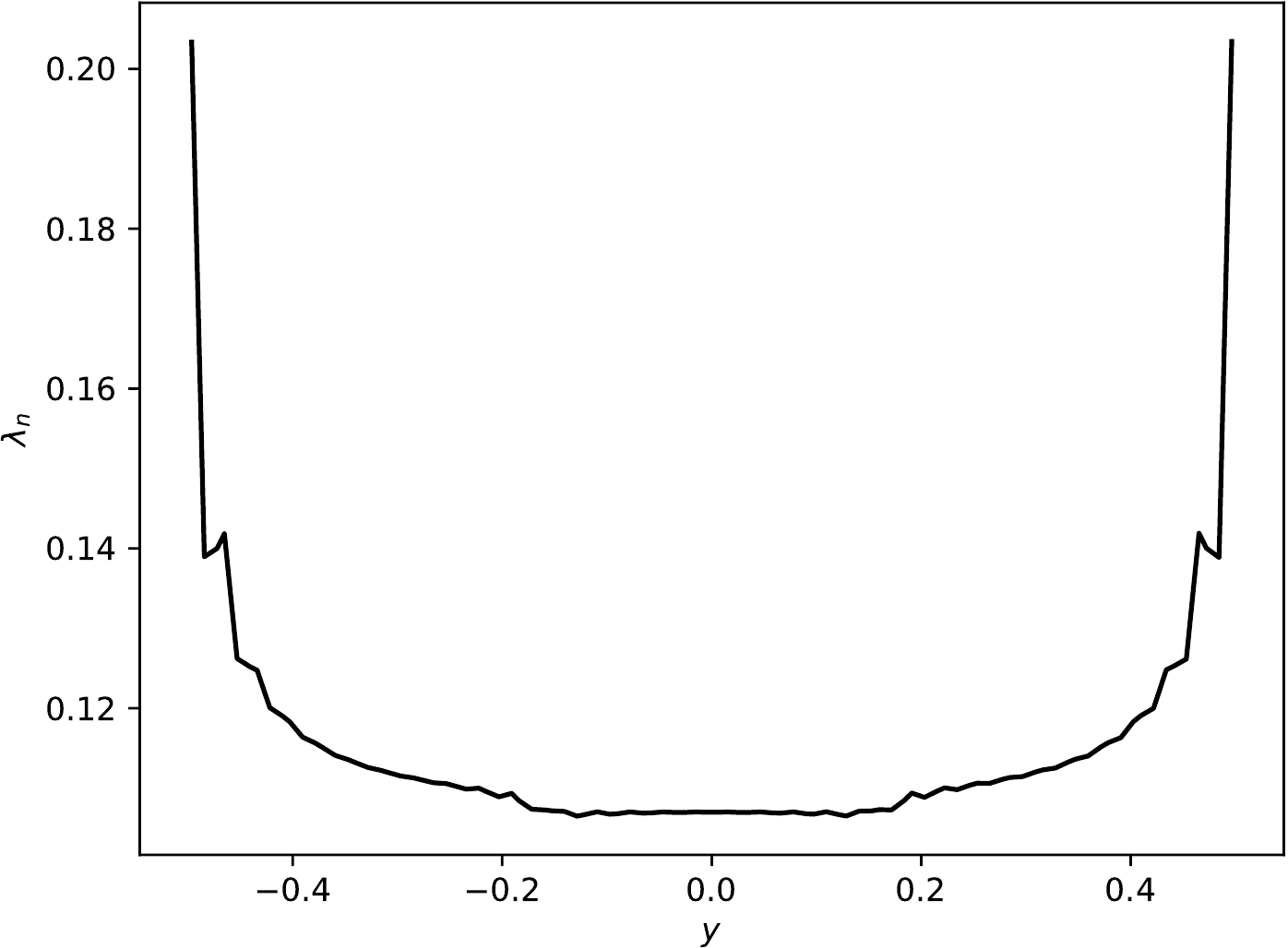}
    \caption{From top to bottom: the deformed, uniformly refined mesh with the number of degrees-of-freedom $N=8{,}450$, the tangential and the normal Lagrange multiplier as a function of $y$.}
    \label{fig:ex1}
\end{figure}

\begin{table}[h!]
  \caption{The norm of the solution and the value of the total error indicator $\eta$ using a uniform mesh family.
  The number of degrees-of-freedom is denoted by $N$.}
  \label{tab:ex1}
  \centering
  \pgfplotstabletypeset[
    create on use/rate/.style={
      create col/gradient loglog={N}{eta},
    },
    clear infinite,
    col sep=comma,
    sci zerofill={true},
    precision=2,
    columns={h,N,norm,eta},
    columns/h/.style={
     column name={$h$},
    },
    columns/N/.style={
     column name={$N$},
     fixed,
    },
    columns/norm/.style={
     column name={$\|\U_h\|_1$},
     precision=6,
    },
    columns/eta/.style={
     column name={$\eta$},
    },
  ]{
h,N,norm,eta
0.3535533905932738,162,0.12512491088285752,0.024313763514359765
0.1767766952966369,578,0.12521228022856246,0.01433158681806633
0.08838834764831845,2178,0.12533660448538167,0.008507952881306404
0.04419417382415922,8450,0.12536196044032774,0.00505894403542394
0.02209708691207961,33282,0.12537688747083747,0.003033564404895748
0.011048543456039806,132098,0.12538238166705057,0.0018265267263056603
}
\end{table}

\begin{figure}[h!]
    \centering
    \includegraphics[width=0.4\textwidth]{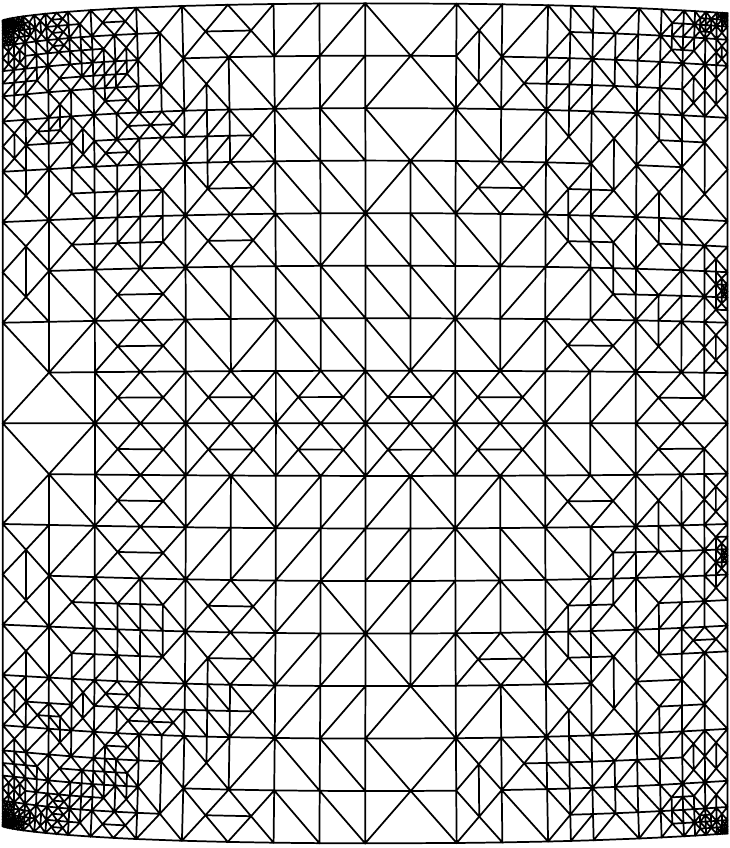}\\[0.5cm]
    \includegraphics[width=0.6\textwidth]{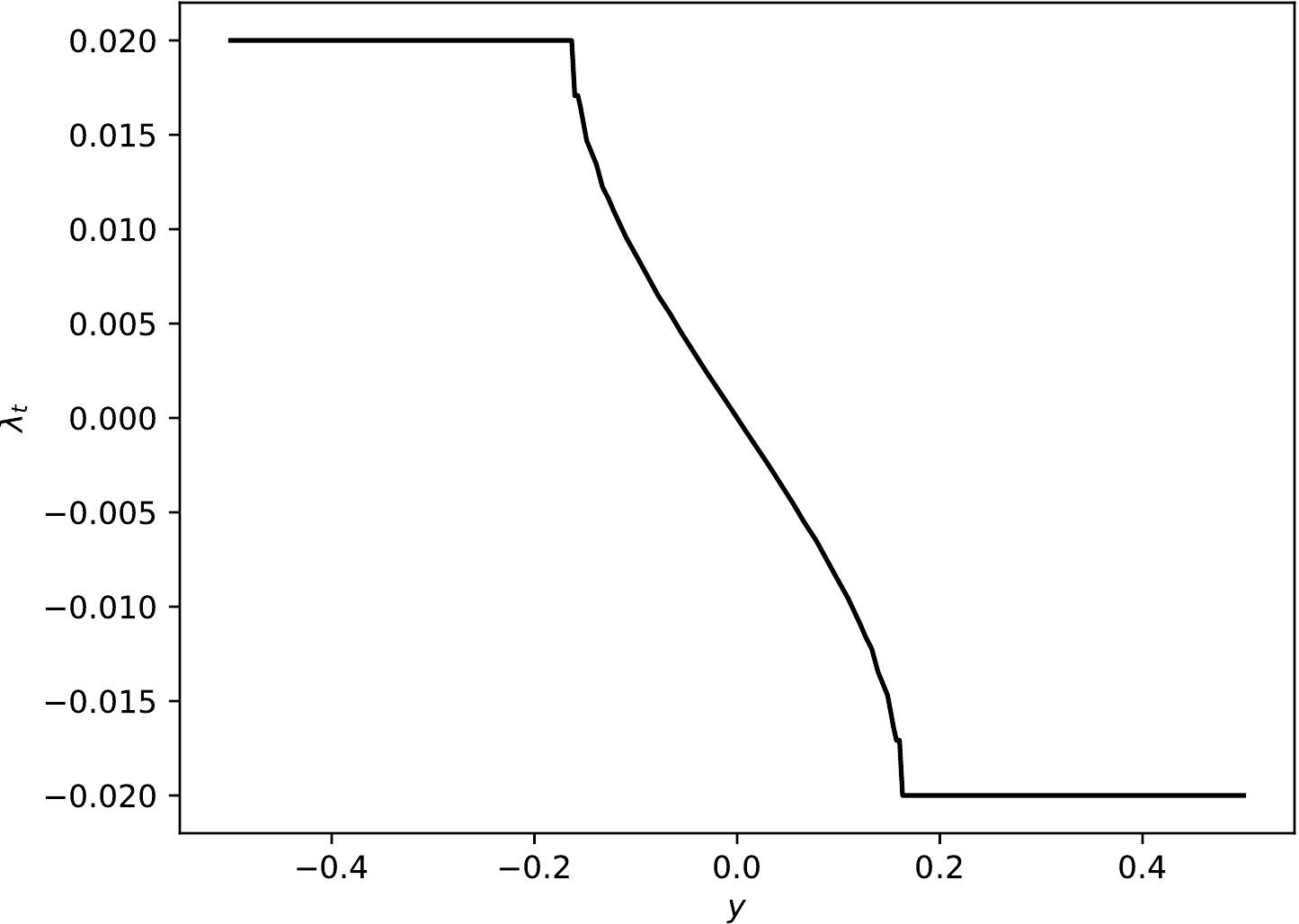}\\[0.5cm]
    \includegraphics[width=0.6\textwidth]{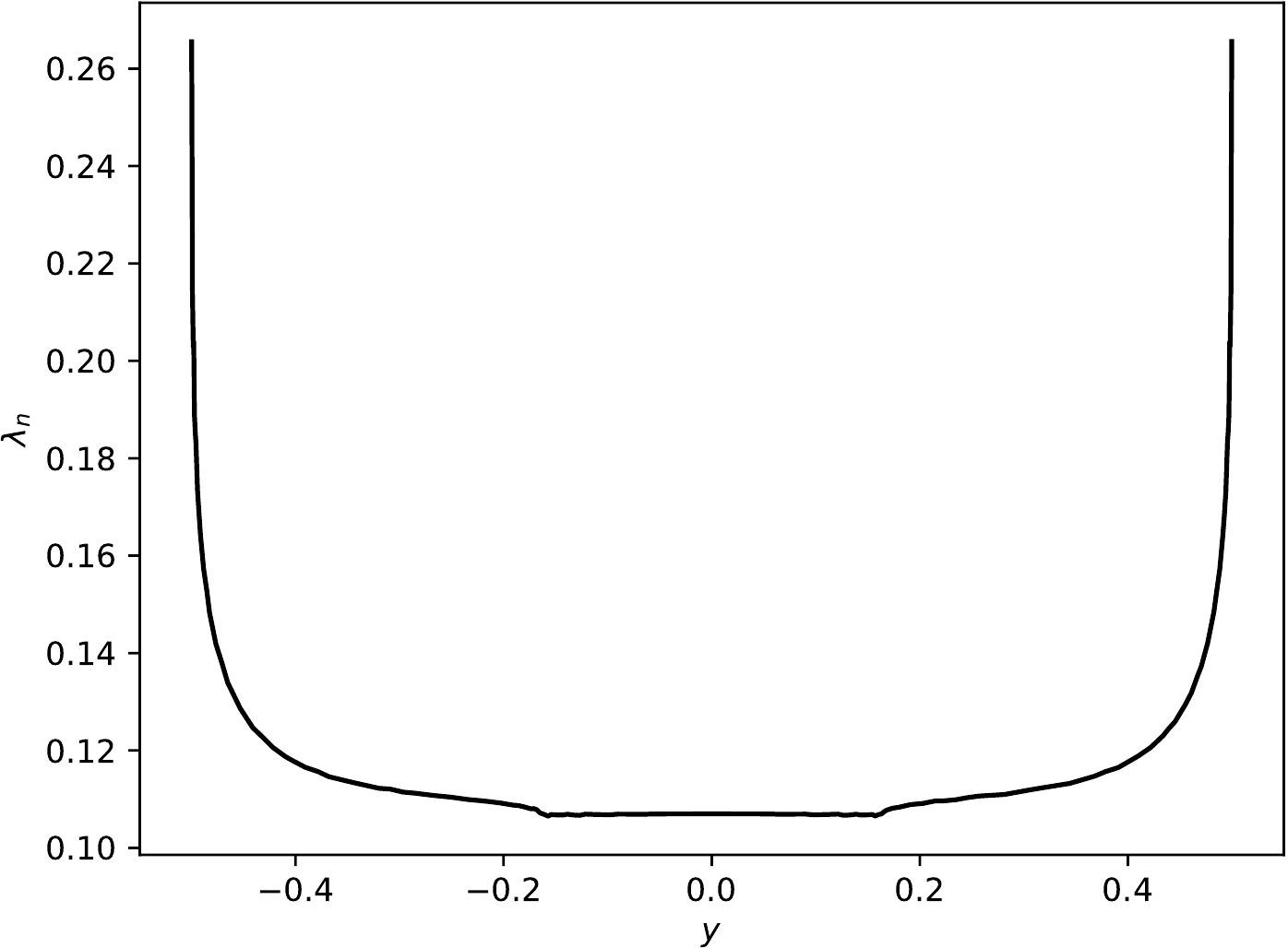}
    \caption{From top to bottom: the deformed, adaptively refined mesh with the number of degrees-of-freedom $N=7{,}946$, the tangential and the normal Lagrange multiplier as a function of $y$.}
    \label{fig:ex1adapt}
\end{figure}

\begin{table}[h!]
  \caption{The norm of the solution and the value of the total error indicator $\eta$ using an adaptive mesh family.
  The number of degrees-of-freedom is denoted by $N$.}
  \label{tab:ex1adapt}
  \centering
  \pgfplotstabletypeset[
    create on use/rate/.style={
      create col/gradient loglog={N}{eta},
    },
    clear infinite,
    col sep=comma,
    sci zerofill={true},
    precision=2,
    columns={N,norm,eta},
    columns/N/.style={
     column name={$N$},
     fixed,
    },
    columns/norm/.style={
     column name={$\|\U_h\|_1$},
     precision=6,
    },
    columns/eta/.style={
     column name={$\eta$},
    },
  ]{
N,norm,eta
162,0.12512491088285752,0.024313763514359765
222,0.12526978699255062,0.02001308388177821
354,0.12544364521781218,0.012683525845360586
426,0.12547443656204887,0.010933278925088838
618,0.12548166411265646,0.008218612898464133
910,0.12537299960788234,0.005624127488268119
1288,0.12538482302457316,0.003797127195811347
1430,0.12539049563648536,0.003336079583929909
1534,0.12539134653902204,0.003196725181132287
1962,0.1253943547146337,0.0024797182310986967
}
  \pgfplotstabletypeset[
    create on use/rate/.style={
      create col/gradient loglog={N}{eta},
    },
    clear infinite,
    col sep=comma,
    sci zerofill={true},
    precision=2,
    columns={N,norm,eta},
    columns/N/.style={
     column name={$N$},
     fixed,
    },
    columns/norm/.style={
     column name={$\|\U_h\|_1$},
     precision=6,
    },
    columns/eta/.style={
     column name={$\eta$},
    },
  ]{
N,norm,eta
2210,0.12538269510578096,0.0022436255610020967
2718,0.1253837647362229,0.0018152779301406317
2946,0.1253841176230748,0.0016180690164700228
3354,0.1253846887692514,0.0013590821327304791
3766,0.12538501613378153,0.0012251285300787706
4270,0.12538514067796114,0.0010852922449460485
5184,0.12538557792859234,0.0009096733948862308
5800,0.12538568587708857,0.000810006286386786
6376,0.12538575903681326,0.0007419458823618061
6640,0.12538578498315298,0.0007087119288495327
7946,0.1253856502670358,0.00060272364045692846
}
\end{table}

\begin{figure}[h!]
    \centering
    \includegraphics[width=0.6\textwidth]{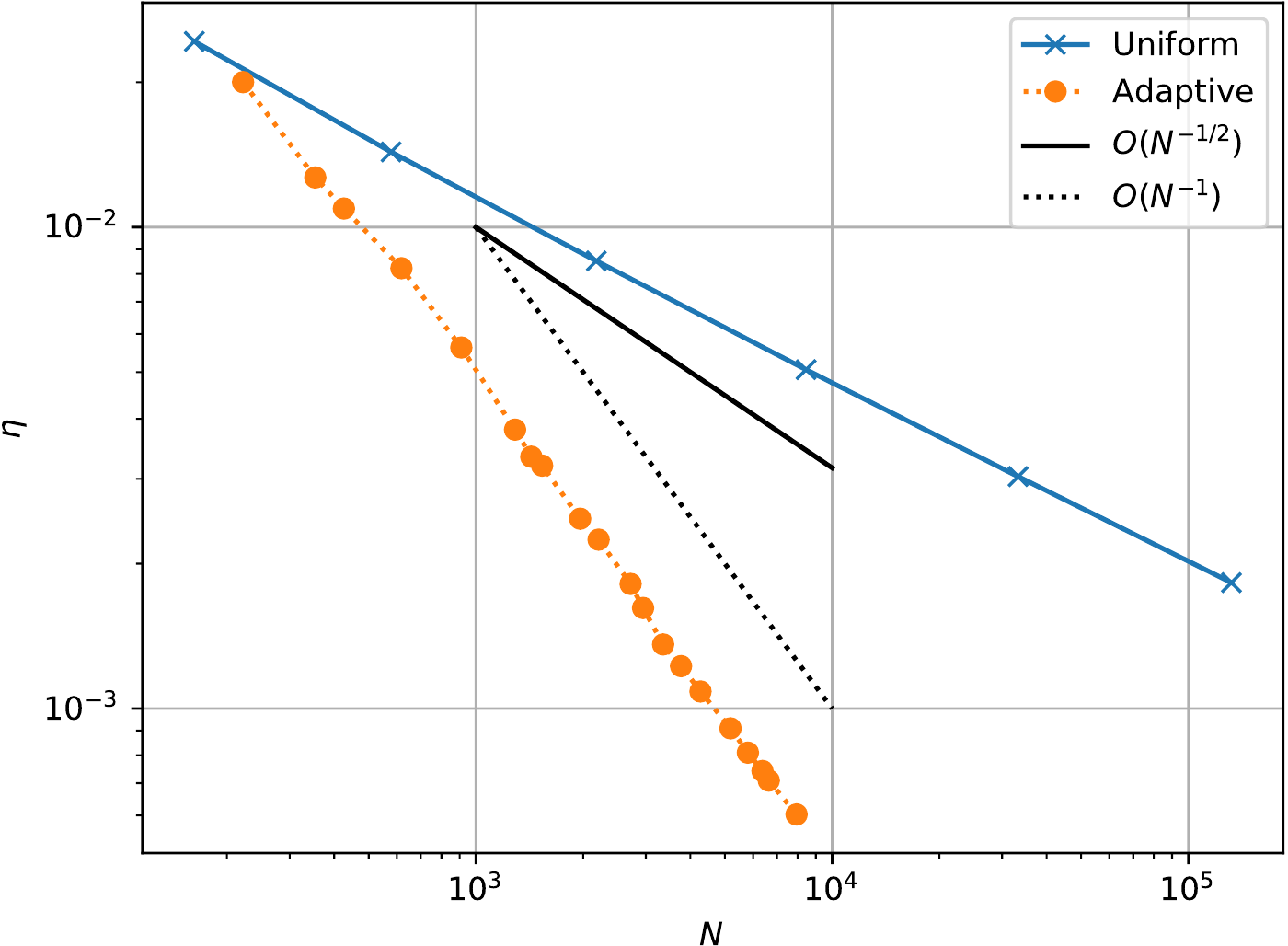}
    \caption{The total error indicator $\eta$ as a function of the number of degrees-of-freedom $N$.}
    \label{fig:ex1conv}
\end{figure}

\section*{Acknowledgements}

This work was supported by the Academy of Finland (Decisions 324611 and 338341) and by the Portuguese government through FCT (Funda\c{c}\~ao para a Ci\^encia e a Tecnologia), I.P., under the projects PTDC/MAT-PUR/28686/2017 and UIDB/04459/2020.

\bibliography{tresca}
\bibliographystyle{elsarticle-num}

\end{document}